\newcommand{\length}{\operatorname{length}}
\newcommand{\leb}{\operatorname{Leb}}
\newcommand{\dist}{\operatorname{dist}}
\newcommand{\diam}{\operatorname{diam}}
\newcommand{\lip}{\operatorname{Lip}}
\def \RR {{\mathbb R}}
\def \ZZ {{\mathbb Z}}
\def \NN {{\mathbb N}}
\def \PP {{\mathbb P}}
\def \TT {{\mathbb T}}
\def \II {{\mathbb I}}
\def \JJ {{\mathbb J}}
\title[SRB measures for partially hyperbolic systems]{SRB measures for partially hyperbolic systems whose central direction is weakly expanding}
\author{José F. Alves}
\address{Jos\'e F. Alves\\ Departamento de Matem\'atica, Faculdade de Ci\^encias, Universidade  do Porto\\
Rua do Campo Alegre 687, 4169-007 Porto, Portugal}
\email{jfalves@fc.up.pt} \urladdr{http://www.fc.up.pt/cmup/jfalves}
\author{Carla L. Dias}
\address{Carla L. Dias\\ Instituto Politécnico de Portalegre, Lugar da Abadessa,
Apartado 148,
7301-901 Portalegre, Portugal}
\email{carlald.dias@gmail.com}
\author{Stefano Luzzatto}
\address{Stefano Luzzatto\\Abdus Salam International Centre for Theoretical Physics, Strada Costiera 11, Trieste, Italy.}
\email{luzzatto@ictp.it}
\urladdr{http://www.ictp.it/$\sim$luzzatto}
\author{Vilton  Pinheiro}
\address{Vilton Pinheiro\\Departamento de Matem\'atica, Universidade Federal da Bahia\\
Av. Ademar de Barros s/n, 40170-110 Salvador, Brazil.}
\email{viltonj@ufba.br}
\begin{document}

\thanks{JFA was partially supported by Funda\c c\~ao Calouste Gulbenkian, CMUP, the European Regional Development Fund through the Programme COMPETE, and FCT under the projects PTDC/MAT/099493/2008, PTDC/MAT/120346/2010 and PEst-C/MAT/UI0144/2011. CLD was supported by FCT}

\subjclass[2010]{37A05, 37C40, 37D25, 37D30}

\keywords{SRB measures, Lyapunov exponents, Nonuniform expansion, GMY structures}

\begin{abstract}
We consider partially hyperbolic \( C^{1+} \) diffeomorphisms of compact Riemannian manifolds of arbitrary dimension which admit a partially hyperbolic tangent bundle decomposition \( E^s\oplus E^{cu} \). Assuming the existence of a set of positive Lebesgue measure on which \( f \) satisfies a weak nonuniform expansivity assumption in the centre~unstable direction, we prove that there exists at most a finite number of transitive attractors each of which supports an SRB measure. As part of our argument, we prove that each attractor admits a Gibbs-Markov-Young geometric structure with integrable return times. We also characterize in this setting SRB measures which are liftable to Gibbs-Markov-Young structures.
 \end{abstract}

\maketitle


\newcommand{\mcup}{\mbox{$\bigcup$}}
\newcommand{\mcap}{\mbox{$\bigcap$}}

\def \RR {{\mathbb R}}
\def \ZZ {{\mathbb Z}}
\def \NN {{\mathbb N}}
\def \PP {{\mathbb P}}
\def \TT {{\mathbb T}}
\def \II {{\mathbb I}}
\def \JJ {{\mathbb J}}

\def \vare {\varepsilon }

 \def \cf {\mathcal{F}}
 \def \cm {\mathcal{M}}
 \def \cn {\mathcal{N}}
 \def \cq {\mathcal{Q}}
 \def \cp {\mathcal{P}}
 \def \cb {\mathcal{B}}
 \def \cc {\mathcal{C}}
 \def \cs {\mathcal{S}}
 \def \bc {\mathcal{B}}
 \def \hc {\mathcal{C}}

\newcommand{\dem}{\begin{proof}}
\newcommand{\cqd}{\end{proof}}

\newcommand{\qand}{\quad\text{and}\quad}

\newtheorem{theorem}{Theorem}
\newtheorem{corollary}{Corollary}

\newtheorem*{Maintheorem}{Main Theorem}

\newtheorem{maintheorem}{Theorem}
\renewcommand{\themaintheorem}{\Alph{maintheorem}}
\newcommand{\cmt}{\begin{maintheorem}}
\newcommand{\fmt}{\end{maintheorem}}

\newtheorem{maincorollary}[maintheorem]{Corollary}
\renewcommand{\themaintheorem}{\Alph{maintheorem}}
\newcommand{\cmc}{\begin{maincorollary}}
\newcommand{\fmc}{\end{maincorollary}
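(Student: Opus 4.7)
The plan is to construct, for each ergodic piece of the positive Lebesgue measure set $H$ on which $f$ is weakly nonuniformly expanding along the centre-unstable direction, a Gibbs--Markov--Young (GMY) geometric structure with integrable return times, and then to use this structure both to produce and to characterise the SRB measures on each attractor. The core geometric input is the notion of a hyperbolic time adapted to the decomposition $E^s\oplus E^{cu}$: at such times the forward orbit contracts distances along $E^s$ uniformly, while the backward iterates of any disk tangent to a narrow cone field around $E^{cu}$ enjoy uniform expansion with bounded distortion. Since the nonuniform expansion hypothesis is the weak one (a $\limsup$ rather than a $\liminf$), one still has that the set of hyperbolic times has \emph{positive asymptotic density} at $\leb$-a.e.\ point of $H$, which is the minimal property on which the subsequent constructions can be run.

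First I would pick a Lebesgue density point of $H$ and use a Pliss-type argument at its hyperbolic times to select a reference $cu$-disk $\Delta$ of uniform size and bounded curvature. Iterating forward, the positive density of hyperbolic times produces an abundance of $cu$-disks crossing $\Delta$ in the sense of the product structure of a GMY rectangle. Via a graph transform along $E^s$ I would foliate a neighbourhood of $\Delta$ by local stable manifolds and verify absolute continuity of the stable holonomy from the $C^{1+}$ regularity together with the uniform backward contraction at hyperbolic times. The return map is then defined as the first hyperbolic time at which a disk fully $u$-crosses $\Delta$ with controlled distortion; the Markov property, the uniform backward contraction of the induced map, and the bounded distortion along $E^{cu}$ all follow directly from the construction.

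The main obstacle, as usual in this circle of ideas, is proving that the induced return time $R$ is Lebesgue-integrable over $\Delta$. The strategy is to bound $\leb\{R>n\}$ by the $\leb$-measure of points of $H\cap \Delta$ having no hyperbolic time in the window $[1,n]$, up to a uniformly small distortion factor coming from the backwards iterates. Positive density of hyperbolic times gives a tail estimate via a direct large-deviation argument applied to the Birkhoff sums of $\log\|(Df|E^{cu})^{-1}\|$; even under the $\limsup$ version of the hypothesis a suitable truncation of the return time together with a Borel--Cantelli argument should deliver summability of $\leb\{R>n\}$, hence $\int R\,d\leb <\infty$. Once $R$ is integrable, the standard tower construction produces an $f$-invariant probability measure that is absolutely continuous along unstable leaves, i.e.\ an SRB measure.

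Having produced at least one SRB measure over each nontrivial ergodic component of $H$, the finiteness of the collection of transitive attractors follows from a standard basin argument: each ergodic SRB measure has a basin of positive Lebesgue measure, the weak expansion hypothesis together with the uniform $E^s$-contraction forces this basin to contain a neighbourhood of the corresponding attractor modulo a $\leb$-null set, and these basins are pairwise disjoint inside the compact manifold. Finally, the liftability characterisation is obtained as a two-way implication: an SRB measure is liftable to a GMY structure precisely when $\leb$-a.e.\ point of its basin is non-uniformly expanding along $E^{cu}$, because such points intersect the base $\Delta$ of one of the structures constructed above on a set of positive measure, and conversely any measure projected from the tower is tautologically SRB by the absolutely continuous conditional measures on the $u$-disks.
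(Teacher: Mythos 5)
Your sketch is aimed at Theorem~A (and, in its last paragraph, at Theorem~F), but it never actually engages with the statement of Corollary~B, which concerns a \emph{globally transitive} $f$ that is weakly nonuniformly expanding on a \emph{full}-measure subset of $M$. Nowhere in the proposal do you use the transitivity of $f$ on $M$, and nowhere do you explain why the number of SRB measures collapses to exactly one, why its basin has full Lebesgue measure, or why $\omega(x)=M$ Lebesgue-almost everywhere. The paper's derivation of the corollary is very short but hinges on a specific point established in Theorem~A that your sketch does not isolate: the basins $\mathcal B(\mu_1),\dots,\mathcal B(\mu_\ell)$ are not merely of positive Lebesgue measure, they have \emph{nonempty interior}. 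Given that, the corollary follows because the basins are pairwise disjoint and forward (indeed fully) invariant, so a dense orbit, once it enters $\operatorname{int}\mathcal B(\mu_j)$, remains in $\mathcal B(\mu_j)$ for all future time and therefore can never visit $\operatorname{int}\mathcal B(\mu_{j'})$ for $j'\ne j$; transitivity thus forces $\ell=1$. With $H$ of full Lebesgue measure, Theorem~A then gives that $\mathcal B(\mu_1)$ has full Lebesgue measure and that $\omega(x)=\Omega_1$ a.e., and the remaining claim $\Omega_1=M$ again uses transitivity together with the nonempty interior of the basin (an invariant open set in a transitive system is dense, and $\Omega_1$ is the common $\omega$-limit set of a full-measure set of points). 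You should state this basin/interior argument explicitly: it is the entire content of the corollary.

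Two further remarks on the parts of the sketch that do touch finiteness. First, your ``standard basin argument'' as written is not enough for finiteness even in Theorem~A: pairwise disjoint sets of positive Lebesgue measure can be countably infinite. The paper's finiteness comes from a \emph{uniform lower bound} on $\leb(\mathcal B(\mu_j))$ (the $\mu$-unshrinkable criterion of Proposition~\ref{PropositionFatErgodicAttractors} and Lemma~\ref{le.unshrink}), and this uniformity is exactly what fails when one drops $\epsilon>0$ in \eqref{NUE2}, in which case the number of attractors can be countable. Second, and this is relevant for the rest of your sketch of Theorem~A: under the $\liminf$ hypothesis \eqref{NUE2} the set of $\sigma$-hyperbolic times has positive \emph{upper} density but not in general positive lower density (see Lemma~\ref{l:hyperbolic2}), so the tail estimate $\leb\{R>n\}$ cannot be obtained by a straightforward large-deviations bound as you suggest; this is precisely the step that forces the paper to abandon the argument of \cite{ABV} and to control the return times indirectly through the satellite sets $S_n$ and Proposition~\ref{d.prop.Sn}. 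None of this, however, changes the main point: for Corollary~B you must argue from transitivity and the nonempty interior of the ergodic basins, and that argument is missing.
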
}

\newtheorem{T}{Theorem}[section]
\newcommand{\ct}{\begin{T}}
\newcommand{\ft}{\end{T}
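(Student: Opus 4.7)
The plan is to reduce the entire statement to the construction, on each transitive attractor, of a Gibbs--Markov--Young (GMY) structure with integrable return times, and then invoke Young's tower machinery to obtain the SRB measures, while using the positive Lebesgue measure set of centre--unstable nonuniform expansion (together with the partially hyperbolic decomposition) to bound the number of such attractors. I would first fix the stable foliation coming from $E^s$, which integrates to an absolutely continuous lamination $\mathcal{F}^s$ by uniform normal hyperbolicity, so that the interesting dynamics can be studied on ``$cu$-discs'' transverse to $\mathcal{F}^s$; the local stable holonomy will later be used to promote $cu$-SRB measures to genuine SRB measures.

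Next I would implement a hyperbolic times argument adapted to the centre--unstable direction. Starting from the positive Lebesgue measure set $H$ where $f$ satisfies the weak nonuniform expansion hypothesis along $E^{cu}$, a Pliss-type lemma gives, for Lebesgue-a.e.\ $x\in H$, a positive density set of times $n$ at which $\prod_{j=n-k}^{n-1}\|Df|_{E^{cu}(f^j x)}^{-1}\|<e^{-ck}$ for all $1\le k\le n$. At such hyperbolic times, the iterate $f^n$ uniformly expands and has bounded distortion on a local $cu$-disc of definite radius around $f^n(x)$, using the $C^{1+}$ regularity and the cone field invariance in $E^{cu}$. This is the standard Alves--Bonatti--Viana mechanism, and the first technical point to check is that the weakness of the expansion assumption (sub-exponential averages suffice) does not destroy the uniform size of these discs, which follows by absorbing the error into the constants once we are on full-density Pliss times.

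The GMY structure is then built following the Alves--Luzzatto--Pinheiro scheme. Choose a reference $cu$-disc $\Delta_0$ meeting $H$ in positive measure, intersect it with an $s$-saturated rectangle $\Lambda$, and use the hyperbolic-times discs to inductively construct a countable partition $\{\Lambda_i\}$ of a full-measure subset of $\Lambda$ together with return times $R\colon \Lambda\to\NN$ so that $f^R(\Lambda_i)$ $u$-crosses $\Lambda$ completely, with uniform backward contraction along unstable leaves and bounded distortion of the Jacobian along the centre--unstable direction. The hard part is verifying that $R$ is Lebesgue-integrable: the weak expansion condition only gives summability of the tail of ``bad'' times in a weak form, so one must use the Pliss density estimate together with a recurrence argument to a neighbourhood of $\Lambda$ inside the attractor, and quantify how long one waits after a hyperbolic time before the image disc grows to $u$-cross $\Lambda$. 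This is where I expect the main obstacle to lie; integrability (as opposed to merely finiteness a.e.) is delicate and requires carefully exploiting both the positive Lebesgue measure of $H$ and the topological structure of the attractor.

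Given the GMY structure with integrable $R$, Young's theorem provides a unique $f$-invariant probability measure $\mu$ which is absolutely continuous along unstable leaves of the tower, i.e.\ an SRB measure for $f$ on the attractor containing $\Lambda$. Finally, for the finiteness statement, I would argue that each transitive attractor $A$ must absorb a definite Lebesgue-positive portion of the basin: points of $H$ whose forward orbit accumulates on $A$ have $cu$-discs of uniform size at hyperbolic times, and by the construction these discs eventually $u$-cross a rectangle supporting a GMY structure in $A$. Since the manifold has finite Lebesgue measure and these basin portions are pairwise disjoint with uniformly positive density, only finitely many such attractors can exist. Uniqueness of the SRB measure on each attractor then follows from transitivity together with the ergodic decomposition of the GMY measure.
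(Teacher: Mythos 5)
Your proposal has the right ingredients in outline (hyperbolic times, the Alves--Bonatti--Viana mechanism, a GMY structure built from $cu$-discs, Young's tower machinery, and a mass-conservation argument for finiteness), but it has a genuine gap at exactly the point you flag yourself, and it is worth being precise about why.

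The crucial difficulty, which the paper is entirely organised around, is that the hypothesis is the $\liminf$ condition \eqref{NUE2} rather than the $\limsup$ condition \eqref{NUE1}. Under \eqref{NUE2}, the Pliss-type lemma (Lemma~\ref{l:hyperbolic2} in the paper) only gives a \emph{limsup} density of $\sigma$-hyperbolic times, not the asymptotic positive frequency that underlies the ABV argument. Your phrase ``once we are on full-density Pliss times'' suggests you are implicitly assuming the stronger $\liminf$ frequency; the paper explicitly states that this is precisely the feature of ABV that cannot be used here. Consequently, the integrability of the return time $R$ is not merely ``delicate'': without a substitute argument the proof simply does not close. The paper's substitute is the accounting scheme in Section~\ref{sec:int}: count hyperbolic times $H^{(n)}(x)$, satellite memberships $S^{(n)}(x)$ and returns $R^{(n)}(x)$, establish the inequality $R^{(n)}+S^{(n)}\ge \kappa H^{(n)}$, use Birkhoff for the induced measure $\nu$ (whose existence and conditional absolute continuity on unstable leaves come from Young's construction of $\nu$ before integrability is known) to control $S^{(n)}/R^{(n)}$, and deduce a uniform lower bound $R^{(n)}/n\ge\kappa'>0$, which forces $\int R\,d\bar\nu<\infty$. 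Your proposal contains no such idea, and ``a recurrence argument to a neighbourhood of $\Lambda$'' is not enough: recurrence controls hitting times but gives no handle on the tail of $R$.

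A second, smaller divergence is in the finiteness argument. You propose to first build GMY structures on each attractor and then show that their basins have uniformly positive Lebesgue measure and are pairwise disjoint. The paper instead proves finiteness first (before any GMY construction), by an abstract measure-theoretic criterion: it introduces $\mu$-unshrinkable sets and $u$-ergodic components (Section~\ref{SectionErgodicComponents}), shows via Zorn's lemma that an unshrinkable set splits $\mu$-mod~$0$ into finitely many $u$-ergodic components, and then verifies in Section~\ref{sec:transatt} that $\widetilde H=\bigcup_n f^n(H)$ is $\leb$-unshrinkable, using the fact that hyperbolic pre-discs map to $cu$-discs of uniform radius $\delta_1$ and that stable holonomy is absolutely continuous. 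Your version would likely work once fleshed out, but note you also need to \emph{first} identify the finitely many $\Omega_j$'s in order to know which transitive set to take as $\Omega$ in the GMY construction of Theorem~\ref{thE}; the paper's order of proof is dictated by this. Finally, be careful when you write ``positive density set of times'': under \eqref{NUE2} you only get a positive $\limsup$ density, which is weaker in exactly the way that makes the whole paper nontrivial.
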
}

\newtheorem{Corollary}[T]{Corollary}
\newcommand{\cco}{\begin{Corollary}}
\newcommand{\fco}{\end{Corollary}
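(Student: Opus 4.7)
The plan is to establish the existence, finiteness, and Gibbs--Markov--Young structure of SRB measures for the attractors by constructing an induced scheme on each attractor, using the weak nonuniform expansion hypothesis in the centre-unstable direction as the sole dynamical input. First, I would extract hyperbolic times in the sense of Alves--Bonatti--Viana from the weak expansion assumption. A Pliss-type argument converts an integral or \(\limsup\) condition on \(\log\|Df^{-1}|E^{cu}\|\) into the existence of a positive-density set of integers \(n\) at which all intermediate iterates \(f^{n-k},\dots,f^n\) contract uniformly exponentially along \(E^{cu}\). Bounded distortion along centre-unstable disks follows from the \(C^{1+}\) hypothesis.

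Second, I would use hyperbolic times to propagate admissible disks tangent to the centre-unstable cone field: at a hyperbolic time, the image of a small disk contains a disk of uniform inner radius on which \(Df^{-n}\) is a contraction with bounded distortion. Selecting a reference disk \(\Delta_{0}\) inside the attractor, the positive Lebesgue density of hyperbolic return times to a small neighbourhood of \(\Delta_{0}\) allows one to extract a countable family of \(u\)-subsets returning fully across \(\Delta_{0}\). Sliding these returns along stable leaves (using uniform contraction on \(E^{s}\)) gives a full rectangle structure with the Markov property, thereby producing the GMY geometric structure on the attractor.

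Third, integrability of the return time function would follow from a quantitative large-deviation estimate on hyperbolic times: the proportion of points in \(\Delta_{0}\) whose first full return exceeds \(n\) decays at a summable rate, controlled by the density of hyperbolic times inherited from the weak expansion hypothesis. Once the GMY structure with integrable return times is in place, Young's tower machinery yields an SRB measure for each attractor, and finiteness of attractors follows from the observation that every Lebesgue-positive ergodic basin must hit some reference disk with positive frequency; two such basins hitting the same rectangle must coincide, and each basin carries uniformly positive Lebesgue mass, so only finitely many can coexist. The liftability characterization would be obtained by showing that an SRB measure with positive centre-unstable Lyapunov exponents concentrates on the nonuniformly expanding set and hence admits a natural extension to the constructed tower.

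The hard part will be the construction of the Markov return structure itself: without uniform hyperbolicity on the ambient manifold, the rectangles over \(\Delta_{0}\) must be extracted by a delicate recurrence argument that simultaneously controls geometry (full crossings with bounded distortion) and combinatorics (integrability of the first return time). Balancing these two requirements, in the absence of any a priori Markov partition, is the technical heart of the argument, and will probably require an inductive construction in which, at each step, hyperbolic returns are filtered to avoid a bad set of ``short'' returns whose measure is summable in \(n\).
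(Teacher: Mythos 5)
The statement you were asked about is one of the macro-definition fragments (\texttt{\textbackslash cco}/\texttt{\textbackslash fco}); your sketch is really a proof strategy for Theorem~A / Corollary~A / Theorem~E taken together, so I will evaluate it on that basis.

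Your outline captures the surface features of the argument (hyperbolic times, cone fields, returns to a reference \(cu\)-disk, a Young tower), but it misses the single technical obstruction that this paper exists to overcome, and the order of your logical steps is inverted relative to the paper. The weak nonuniform expansion hypothesis \eqref{NUE2} is a \(\liminf\) condition, and Lemma~\ref{l:hyperbolic2} consequently gives only a \(\limsup\) bound on the frequency of \(\sigma\)-hyperbolic times. Your Step~1 asserts a ``positive-density set of integers'' of hyperbolic times as if the Pliss argument delivered \(\liminf\)-density; it does not. The paper explicitly remarks, after Lemma~\ref{l:hyperbolic2}, that the lack of \(\liminf\)-density is precisely what invalidates the ABV strategy of pushing Lebesgue measure forward and averaging, and this forces the entirely different machinery of Sections~\ref{SectionErgodicComponents}--\ref{sec:int}. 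Any proof that treats hyperbolic times as having a uniform asymptotic frequency from the outset is reproving ABV, not this theorem.

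Second, your plan derives finiteness of attractors \emph{after} constructing the GMY structures (``once the GMY structure with integrable return times is in place \dots finiteness of attractors follows''). The paper goes the other way: Section~\ref{SectionErgodicComponents} establishes an abstract criterion (a \(\mu\)-unshrinkable set admits finitely many \(u\)-ergodic components, each with a well-defined \(\omega\)-limit attractor), and Section~\ref{sec:transatt} verifies this for \(\widetilde H\) using only hyperbolic pre-disks, stable holonomy, and Lemma~\ref{posdens} — before any inducing scheme exists. This ordering matters because the GMY construction (Sections~\ref{s.partion}--\ref{intgrreturns}) is carried out on a single fixed transitive \(\Omega_j\) and needs a \(cu\)-disk \(\Delta_j\subset\Omega_j\) of definite radius on which weak NUE holds Lebesgue-a.e.; that disk is produced by the finiteness/topological analysis, not the other way round. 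Third, your integrability argument via a ``summable tail of the first return time'' is not what the paper does and would be hard to carry out with only \(\limsup\)-density of hyperbolic times; the paper instead compares counting functions \(H^{(n)},S^{(n)},R^{(n)}\), uses the finiteness of \(\sum_n\leb_\Delta(S_n)\) from Proposition~\ref{d.prop.Sn}, and applies Birkhoff's ergodic theorem to the already-constructed induced invariant measure \(\bar\nu\) to conclude \(\int R\,d\bar\nu<\infty\) by contradiction. Finally, your liftability step is too quick: under the hypothesis of positive Lyapunov exponents alone one does not get \eqref{NUE2} for \(f\); the paper passes to a power \(f^N\) via Oseledets and Kingman and decomposes \(\mu\) into at most \(N\) ergodic components for \(f^N\), which is a genuine additional argument you have not supplied.
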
}

\newtheorem{Proposition}[T]{Proposition}
\newcommand{\cpr}{\begin{Proposition}}
\newcommand{\fpr}{\end{Proposition}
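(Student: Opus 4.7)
The excerpt truly ends in preamble material: the last tokens are \verb|\newcommand{\cpr}{\begin{Proposition}}| and \verb|\newcommand{\fpr}{\end{Proposition}|, with no displayed theorem, lemma, proposition or claim. Since there is no formal statement to sketch a proof for, I interpret the task as proposing a proof strategy for the main result announced in the abstract, namely: \emph{Assuming a weak nonuniform expansivity condition in the centre--unstable direction on a positive Lebesgue measure set $H$, the diffeomorphism $f$ admits at most finitely many transitive attractors, each supporting an SRB measure and carrying a Gibbs--Markov--Young (GMY) structure with integrable return times; moreover the SRB measures liftable to GMY structures can be intrinsically characterised.} The following is my plan for such a theorem.

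The starting point I would use is the now-classical machinery of \emph{hyperbolic times} adapted to the centre--unstable bundle. Concretely, for $x\in H$ the weak expansion hypothesis guarantees a positive frequency of times $n$ at which the backward contraction of $Df^n|_{E^{cu}}$ along suitable centre--unstable disks is uniform at all intermediate scales, with distortion bounded by a $C^{1+\alpha}$ argument. Combined with the global stable foliation coming from $E^s$, this produces, at every hyperbolic time, a full-size centre--unstable disk $\Delta_n(x)$ with bounded geometry. I would first use these disks to push forward a Lebesgue reference measure on a fixed centre--unstable disk and take Cesàro limits; the Pliss-type positive density of hyperbolic times ensures that the limit is a non-trivial invariant probability measure whose disintegrations along centre--unstable plaques are absolutely continuous with respect to the induced Riemannian volume, i.e.\ an SRB measure. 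Ergodic decomposition and the local product structure then give finitely many ergodic SRB measures per transitive attractor.

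To upgrade this to a \textbf{GMY structure with integrable return times} I would construct a reference rectangle $\Lambda$ with product structure, much in the spirit of Young's towers for nonuniformly hyperbolic maps. The construction selects a centre--unstable disk $\Delta_0$ of definite size intersecting $H$ in positive measure, then saturates it by local stable leaves; using hyperbolic times at points of $\Delta_0\cap H$ one defines an inducing scheme whose return branches are graphs of uniformly expanding maps from $s$-subrectangles of $\Lambda$ onto $u$-curves crossing $\Lambda$. The bounded-distortion estimates at hyperbolic times yield the required (c1)--(c5) properties of a GMY structure. The crucial analytical input is \emph{integrability} $\int R\, d(\leb|_{\Delta_0}) < \infty$ of the return time $R$: this is where I expect the main obstacle, because the hypothesis only provides weak (sub-exponential, or asymptotic) expansion. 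I would attack this by large deviation / tail estimates for the first hyperbolic time map, showing that its distribution tail is controlled by the tail of the time one must wait to accumulate a definite amount of $\log\|Df^{-1}|E^{cu}\|^{-1}$. Under the weak expansion hypothesis this tail is summable in an averaged sense, which suffices for integrability but typically not for exponential decay.

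\textbf{Finiteness} of transitive attractors would follow once a GMY structure of definite size is built on each attractor: the basin of such a structure has positive Lebesgue measure bounded below by a uniform constant depending only on the geometry of $\Lambda$, so compactness of $M$ prevents accumulation. Each attractor thus contains at most finitely many ergodic SRB measures, and standard arguments on support and transitivity collapse them into one per transitive component. Finally, for the \textbf{characterisation of liftable SRB measures}, I would show that an ergodic SRB $\mu$ lifts to a given GMY structure $(\Lambda,R)$ if and only if $\mu$-a.e.\ point has a positive density of hyperbolic times and its centre--unstable disk meets $\Lambda$ with positive conditional probability; equivalently, $\mu(H)>0$. The ``only if'' direction is essentially tautological from the GMY construction; the ``if'' direction uses the absolute continuity of disintegrations of $\mu$ together with the Borel--Cantelli-type recurrence of generic centre--unstable plaques to $\Lambda$, which in turn follows from the integrability of $R$ and the local product structure built in the previous step.
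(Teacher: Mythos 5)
Your opening move is the one the paper explicitly warns against. You propose to push forward a Lebesgue reference measure on a centre--unstable disk and take Ces\`aro limits at hyperbolic times, crediting ``Pliss-type positive density'' of hyperbolic times. That is the argument of Alves--Bonatti--Viana, and it requires positive \emph{lower} density of hyperbolic times, which follows from the $\limsup$ condition \eqref{NUE1} but not from the $\liminf$ condition \eqref{NUE2} assumed here. Under \eqref{NUE2} one only gets $\limsup_n \frac1n\#\{j\le n: j \text{ is a } \sigma\text{-hyperbolic time}\}\ge\theta$ (Lemma~\ref{l:hyperbolic2}), and the paper states explicitly in Section~\ref{sec:transatt} that this is exactly why the Ces\`aro-limit construction of~\cite{ABV} cannot be used. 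The entire point of the paper is that under weak nonuniform expansion the SRB measure must be produced directly from the GMY tower, not by averaging pushforwards. So the SRB measure cannot precede the GMY construction in the way your plan suggests; it must be a consequence of it.

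The remaining ingredients also diverge from the paper's route in ways worth noting. Your finiteness argument (each GMY rectangle has a basin with a uniform lower bound on Lebesgue measure, so compactness limits their number) has a bootstrapping problem: to build a GMY structure on each attractor $\Omega_j$ one first needs to know $\Omega_j$ is transitive and contains a full-size $cu$-disk on which weak NUE holds $\leb_\Delta$-a.e.\ (Proposition~\ref{thmAtop}). The paper handles this by first proving finiteness \emph{abstractly}, via the notion of a $\mu$-unshrinkable set and $u$-ergodic components (Section~\ref{SectionErgodicComponents}), and only then builds the GMY structure on the resulting transitive pieces. For integrability of the return time you propose large-deviation tail estimates; the paper instead runs a purely ergodic-theoretic counting argument (Section~\ref{sec:int}) comparing the frequencies of returns, satellite visits and hyperbolic times via Birkhoff's theorem, fed by the summability $\sum_n\leb_\Delta(S_n)<\infty$ of Proposition~\ref{d.prop.Sn}. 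This is more elementary and crucially avoids needing uniform tail decay, which the weak hypothesis does not supply. Finally, your characterisation of liftable SRB measures via ``$\mu(H)>0$'' is not what the paper proves; Theorem~\ref{thF} characterises liftability by positivity of the $E^{cu}$-Lyapunov exponents, and the nontrivial step (Proposition~\ref{pr.WNUE}) is to show that some iterate $f^N$ is then nonuniformly expanding so the GMY machinery can be applied to $f^N$.
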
}

\newtheorem{Lemma}[T]{Lemma}
\newcommand{\cle}{\begin{Lemma}}
\newcommand{\fle}{\end{Lemma}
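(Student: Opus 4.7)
The only mathematical content visible in the excerpt is the abstract's informal Main Theorem: finiteness of transitive attractors supporting SRB measures, together with a GMY structure on each attractor with integrable return times, under a weak nonuniform expansion hypothesis along $E^{cu}$ on a positive Lebesgue measure set $H$. Here is how I would approach the proof.

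The plan is to first upgrade the hypothesis on $H$ to pointwise geometric control, then use that control to construct a Gibbs--Markov--Young structure on each piece of the attractor, and finally deduce finiteness and the existence of SRB measures from the standard machinery of Young towers. Concretely, I would begin by applying a Pliss-type argument to the nonuniform expansion estimate in $E^{cu}$: this extracts, for Lebesgue a.e.\ $x \in H$, a subsequence of \emph{hyperbolic times} with positive asymptotic density along which one has uniform backward contraction in the centre--unstable direction. Using the domination $E^s \oplus E^{cu}$ and $C^{1+}$ regularity, at such times one can exponentiate a centre--unstable disk of uniform inner radius with uniformly bounded distortion for the inverse branches.

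Next I would fix a reference centre--unstable disk $\Delta_0$ intersecting the support of the lift of Lebesgue to a hyperbolic return orbit, and construct inductively a countable family of pairwise disjoint $u$-subdisks $\Delta_i \subset \Delta_0$ each mapped by some iterate $f^{R_i}$ diffeomorphically onto a ``full $u$-subset'' of $\Delta_0$, with uniform distortion; this is the geometric core of a GMY structure, and the return time function $R$ is defined piecewise by $R_i$ on $\Delta_i$. Integrability of $R$ would follow from a quantitative bound on the Lebesgue measure of points in $\Delta_0$ whose $n$-th hyperbolic return has not yet occurred: the positive density of hyperbolic times from Pliss, combined with the positive Lebesgue measure of $H$ and standard large-deviation estimates for the frequency of hyperbolic times, yields a summable tail for $\{R > n\}$.

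Once a GMY structure with integrable return times is available, the existence of an SRB measure supported on the attractor containing $f^R(\Delta_0)$ follows from Young's tower construction, and its basin contains a positive Lebesgue measure subset of the local centre--unstable disks at hyperbolic returns, hence of a uniform neighbourhood of the attractor. Finiteness of transitive attractors then drops out by a pigeonhole argument: each such attractor must contain a centre--unstable disk of a fixed uniform inner radius (coming from hyperbolic times), so the manifold can support only finitely many disjoint ones. For the liftability characterization, I would show that an SRB measure is liftable to a GMY structure iff it gives positive mass to the set of points with infinitely many simultaneous hyperbolic returns to some fixed reference disk, which is exactly the compatibility condition needed to quotient by stable manifolds and define an induced Markov map.

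The main obstacle I expect is the geometric step: constructing the GMY partition with uniformly bounded distortion while controlling the return time tails under only a \emph{weak} (nonuniform, not necessarily integrable a priori) expansion hypothesis. One must be careful that the inductive construction of subdisks $\Delta_i$ does not lose a definite fraction of mass to orbits that leave $H$ before their next hyperbolic return, and that the distortion accumulated across returns remains uniformly bounded; this requires a delicate interplay between the quantitative Pliss bounds, the transversality of stable leaves to $\Delta_0$, and an a priori control on the diameter of the pre-images used in the inductive step.
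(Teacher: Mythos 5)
Your overall architecture (Pliss to hyperbolic times, inductive GMY partition on a reference $cu$-disk, finiteness from uniform-size saturated disks, SRB via Young towers) matches the paper's, but there is a genuine gap at precisely the step the paper identifies as the crux of the problem: the integrability of the return time.

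You propose to deduce a ``summable tail for $\{R>n\}$'' from ``positive density of hyperbolic times from Pliss, combined with\ldots standard large-deviation estimates for the frequency of hyperbolic times.'' This is the argument of Alves--Bonatti--Viana, and it requires positive \emph{lower} asymptotic density (a $\liminf$) of hyperbolic times, which follows from the strong non-uniform expansion hypothesis \eqref{NUE1}. Under the \emph{weak} hypothesis \eqref{NUE2} ($\liminf$ of the ergodic sums, not $\limsup$), the Pliss argument only yields positive \emph{upper} density: $\limsup_n\frac1n\#\{j\le n:j\text{ is a hyperbolic time}\}\ge\theta$, as in Lemma~\ref{l:hyperbolic2}. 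With only upper density you cannot control the gaps between hyperbolic times, and the tail $\leb\{R>n\}$ need not be summable — indeed it need not decay at any prescribed rate. This is exactly the obstruction the paper is built around; Section~\ref{sec:int} avoids tail estimates entirely. Instead it shows directly that the expectation $\int R\,d\bar\nu$ is finite: the counting inequality $R^{(n)}(x)+S^{(n)}(x)\ge\kappa H^{(n)}(x)$ between the number of returns, the number of satellite visits, and the number of hyperbolic times up to time $n$, combined with the summability of the satellite measures (Proposition~\ref{d.prop.Sn}) and Birkhoff's theorem for the induced map, forces $R^{(n)}(x)/n$ to be bounded below along a subsequence; since $n/R^{(n)}(x)\to\int R\,d\bar\nu$ (even without a priori integrability), this gives $\int R\,d\bar\nu<\infty$. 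Your route would require strengthening the hypothesis back to \eqref{NUE1}.

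Two secondary remarks. First, for finiteness, ``the manifold can support only finitely many disjoint $cu$-disks of fixed inner radius'' is not quite the right invariant — distinct attractors need not contain disjoint disks. The correct statement is that each attractor's basin contains the stable saturate of such a disk, and these saturates have a uniform Lebesgue lower bound and lie in pairwise disjoint $u$-ergodic components; this is abstracted in the paper's notion of a $\mu$-unshrinkable set and Lemma~\ref{LemmaCriteionForErgodicity}. Second, your proposed liftability criterion (positive mass on infinitely many simultaneous hyperbolic returns) differs from the paper's route (showing $\mu$-a.e.\ some power $f^N$ is non-uniformly expanding along $E^{cu}$ by Oseledets plus Kingman, then invoking Theorem~\ref{thE}); you would still need to justify why such a criterion follows from positive Lyapunov exponents in the $E^{cu}$ direction.
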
}

\newtheorem{Sublemma}[T]{Sublemma}
\newcommand{\csle}{\begin{Sublemma}}
\newcommand{\fsle}{\end{Sublemma}}

\theoremstyle{definition}
\newtheorem{Conjecture}{Conjecture}

\newtheorem*{Conjecture*}{Conjecture}

\theoremstyle{remark}

\newtheorem{Remark}[T]{Remark}
\newcommand{\cre}{\begin{Remark}}
\newcommand{\fre}{\end{Remark}}

\newtheorem{privrem}{Private remark}

\newtheorem{Definition}{Definition}
\newcommand{\cd}{\begin{Definition}}
\newcommand{\fd}{\end{Definition}}

\section{Introduction}

A key outstanding question in the theory of Dynamical Systems, which   motivates a large amount of research, is the \emph{Palis conjecture} \cite{P05}, which states that ``typically" dynamical systems on finite dimensional manifolds have a finite number of ``ergodic attractors''.   More specifically,  a Borel probability measure \(  \mu  \) on \(  M  \) is a \emph{physical measure} if there exists a
positive Lebesgue measure set of points $x \in M$ such that for any continuous $\varphi:M \to \RR$ one has
\begin{equation*}
    \lim_{n\rightarrow\infty}\frac{1}{n}\sum_{j=0}^{n-1}\varphi(f^j(x)) \longrightarrow \int\varphi\, d\mu .
\end{equation*}
The set of points \(  \mathcal B(\mu)  \) for which the above convergence holds is  called the \emph{basin} of \(  \mu  \). The Palis conjecture then says that typical dynamical systems admit at least one and at most a finite number of physical measures and that their basins have full Lebesgue measure in \( M \).

The Palis conjecture has been proved in the setting of smooth one-dimensional maps  for  various notions of ``typical'' \cite{Lyu02, AviLyuMel03, KSS07},  but it is still completely open in higher dimensions where the general problem of proving ergodicity or even finiteness of the number of ergodic components of a measure is extremely difficult. In this paper we give a contribution to this area of research by proving that certain natural classes of partially hyperbolic systems admit SRB measures which can have at most a finite number of ergodic components. As part of our argument we also show that these SRB measures are associated to particular geometric structures.  We give the precise definitions below as well as a more detailed discussion and comparison of existing results.

\subsection{SRB measures}
Throughout this paper we
let $M$ be a finite dimensional compact Riemannian manifold and $f:M \rightarrow M$ a diffeomorphism of class \( C^{1+}\), meaning that \( f \) is \( C^1\) with H\"older continuous derivative. We denote by $\leb$ a normalized
volume form on the Borel sets of $M$ that we call  {\em Lebesgue
measure.} Given a submanifold $\gamma\subset M$ we use $\leb_\gamma$
to denote the measure on $\gamma$ induced by the restriction of the
Riemannian structure to~$\gamma$.

In our setting we will work with a particular class of physical measures:
 we say that an ergodic \(  f  \)-invariant probability measure \(  \mu  \) is an
\emph{SRB measure} if it has no zero Lyapunov exponents \(  \mu  \) almost everywhere and the conditional measures on unstable manifolds are absolutely continuous with respect to the Lebesgue measures on these manifolds.
It follows by standard results on the absolute continuity of the stable foliation that SRB measures are a particular form of physical measures \cite{Bow75, Pes77, PS82, You02}.

\subsection{Hyperbolicity}The only general class of dynamical systems in higher dimensions for which the problem of the number of physical measures has been solved is that of  \emph{uniformly hyperbolic} diffeomorphisms, i.e.  systems which admit a continuous invariant tangent bundle decomposition \( TM=E^{s}\oplus E^{u} \) such that the differential map is uniformly contracting on \( E^{s} \) and uniformly expanding on \( E^{u} \). Indeed, it has long been known, since the classical work of Anosov, Smale, Ruelle and Bowen \cite{Bow75, BR75, Rue76, Sin72} in the 1970's, that such systems can admit at most a finite
number of physical measures.

The problem remains wide open for the natural generalisations of uniform hyperbolicity, namely \emph{nonuniform hyperbolicity} where the decomposition  \( TM=E^{s}\oplus E^{u} \) is only measurable and the contraction and expansion estimates are only asymptotic and not uniform \cite{BarPes07, Pes77}, and \emph{partial hyperbolicity} where the tangent bundle decomposition takes the form \( TM=E^{s}\oplus E^{c}\oplus E^{u} \) which is still assumed to be continuous and to admit uniform contraction and expansion estimates in \( E^{s} \) and \( E^{u} \) respectively, but also  includes a ``central'' direction on which very little is assumed \cite{BriPes73, Pes04}, we give a more precise definition below. A vast literature exists concerning the properties of systems satisfying such weak hyperbolicity conditions and several papers address specifically the  existence of SRB measures under various kinds of additional assumptions, most of which include the assumption that the system be \emph{both} partially hyperbolic and nonuniformly hyperbolic (in the central direction),  see \cite{ABV, AlvAraVas07, AlvPin10, BurDolPesPol08, BV00, Cas02, CatHeb01, CowWil05,  Dol00, Dol04, FieNic04, Hat06, HerHerTahUre11, PS82, Vas07, ViaYan13}.

The main purpose of this paper is to relax  the conditions of nonuniform hyperbolicity in the central direction assumed in \cite{ABV}, thus obtaining a significantly more general result which we stress is not contained in any other existing result. This generalization requires a completely different approach  which is more powerful in this setting and probably more suited to potential further extensions.

\subsection{Partial hyperbolicity}
We say that a forward invariant compact set \(  K \subseteq M \)  admits a
\emph{dominated decomposition}
if
there is a continuous \(  Df  \)-invariant splitting $T_K M=E^{cs}\oplus
E^{cu}$  and there exists a Riemannian metric on $M$ and a
  constant $\lambda<1$ such that  for all $x\in K$
 \begin{equation}\label{domination1}
    \|Df |_{E^{cs}_x}\|
\cdot \|Df^{-1} |_{E^{cu}_{f(x)}}\| < \lambda.
\end{equation}
We say that \(  f  \) is
\emph{partially hyperbolic} if moreover we have for all \(  x\in K  \)
\[
\|Df |_{E^{cs}_x} \| < \lambda.
\]
To emphasize the uniform contraction  we shall write \(  E^{cs}=E^{s}  \), so that we have
\[
T_K M=E^{s}\oplus
E^{cu}.
\]
We remark that, more generally, a diffeomorphism with dominated decomposition is said to be partially hyperbolic
 if at least one of the sub-bundles \(  E^{cs}\) or \(E^{cu}  \) admits uniform contraction or uniform expansion,  respectively. In this paper we  are assuming that it is the stable sub-bundle which admits uniform estimates.

\subsection{Nonuniform expansion}
Without  additional assumptions on the centre-unstable bundle \(  E^{cu}  \) it is very difficult to obtain any results at all about the dynamics.
In \cite{ABV},  the existence of physical (SRB) measures was proved under the assumption that
there exists a set \(  H  \) of positive Lebesgue measure
on which \(  f  \) is
\emph{non-uniformly expanding} along~\(  E^{cu}  \):   there exists an \(  \epsilon>0  \)
and some choice of Riemannian metric on \(  M  \) such that for all \(  x\in H  \)
\begin{equation} \label{NUE1}
\limsup_{n\to+\infty} \frac{1}{n}
\sum_{j=1}^{n} \log \|Df^{-1}|_{E^{cu}_{f^j(x)}}\|<-\epsilon.
\end{equation}
In this paper we address a subtle but non-trivial generalization of this result which has remained open for over a decade.
We say that the map  \(  f  \) is   \emph{weakly non-uniformly expanding} along \(  E^{cu}  \)  on a set \(  H  \)
if there exists an \(  \epsilon>0  \) and some choice of Riemannian metric on \(  M  \)  such that for all \(  x\in H  \)
\begin{equation} \label{NUE2}
\liminf_{n\to+\infty} \frac{1}{n}
\sum_{j=1}^{n} \log \|Df^{-1}|_{E^{cu}_{f^j(x)}}\|< - \epsilon.
\end{equation}
We emphasize that the  \(  \liminf  \) condition \eqref{NUE2} implies that the growth  only needs to be verified  on a subsequence of iterates,  in contrast to the limsup condition \eqref{NUE1},  where the condition needs to be verified for all sufficiently large times. The techniques and methods that we use below to deal with this weaker assumptions are completely different from those used in~\cite{ABV}.

\cmt
\label{thA}
Let \( f: M\to M \) be a  \( C^{1+} \)
diffeomorphism,  $K \subseteq M$   a forward invariant compact set on which
\(  f   \) is  partially hyperbolic with splitting \(  T_KM=E^s\oplus E^{cu}\), and $H \subseteq K$ a set with positive Lebesgue measure on which
$f$ is weakly non-uniformly expanding  along \(  E^{cu}\).
Then

\begin{enumerate}
\item
there exist  closed invariant transitive sets \(  \Omega_{1},...,\Omega_{\ell}  \)
such that
  for Lebesgue almost every \(  x\in H  \) we have
  \(  \omega(x)=\Omega_{j}  \) for some  \(  1\leq j\leq \ell \);
\item there exist
 SRB measures \(  \mu_{1},..., \mu_{\ell}  \) supported on the sets \(  \Omega_{1},...,\Omega_{\ell}  \), whose basins have nonempty interior, such that
  for Lebesgue almost every \(  x\in H  \) we have
\( x\in \mathcal B({\mu_{j}})  \) for some  \(  1\leq j\leq \ell \).
\end{enumerate}
\fmt

We remark that our argument works  also if we let \(  \epsilon=0  \) in \eqref{NUE2}, in which case we get a countable number of transitive sets and corresponding SRB measures supported on them.
As the ergodic basins \(\mathcal B(\mu_j)\) have non-empty interior, in the special case in which \(  f  \) is transitive, partially hyperbolic and  weakly nonuniformly expanding along \(  E^{cu}  \) on the whole manifold~\(  M  \), we get the following consequence.

\cmc\label{th:1}
Let \( f: M\to M \) be a  \( C^{1+} \)  transitive partially hyperbolic
diffeomorphism with splitting \(  TM=E^s\oplus E^{cu}\) which is  weakly nonuniformly expanding along  \(  E^{cu}  \) on a subset  of full Lebesgue measure. Then
\(  \omega(x)=M  \) for Lebesgue almost every \(  x  \) and \(  f  \) has a unique SRB measure whose basin has full Lebesgue measure in \(  M  \).
\fmc

It remains an interesting question whether these results hold true under the weaker assumption that the map \(  f  \) has \emph{positive Lyapunov exponents along \(  E^{cu}  \)}: there exists a set \( H \) of positive Lebesgue measure and there exists some \(  \epsilon>0  \) such that
\begin{equation}\label{poslyap}
\limsup_{n\to\infty} \frac 1n \log \|Df^{n}(x)v\| > \epsilon,
\end{equation}
for every \(  x\in H  \) and  every non-zero vector \(  v\in E^{cu}_{x}  \).  If \(  \dim(E^{cu})=1  \) then \eqref{poslyap} is equivalent to \eqref{NUE2} and therefore analogous results as in  Theorem \ref{thA}  and Corollary \ref{th:1}  hold under this weaker assumption.
Observe that condition \eqref{poslyap}, unlike condition \eqref{NUE2}, does not depend on the choice of metric. Moreover, condition \eqref{poslyap} is strictly weaker than \eqref{NUE2} for a given norm; see for instance the example in \cite[Section 4]{AlvAraSau}.

\begin{Conjecture*}
Assume that \(  f  \) is partially hyperbolic and satisfies \eqref{poslyap} along \(  E^{cu}  \) on a set \(  H  \). Then
there exists a Riemannian metric on \(  M  \) such that \(  f  \) satisfies \eqref{NUE2} on \(  H  \).
\end{Conjecture*}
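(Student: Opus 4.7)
The plan is to recast \eqref{NUE2} as a Birkhoff-type condition and then construct an adapted Riemannian metric that witnesses it. Using the identity $\log\|Df^{-1}|_{E^{cu}_{f(x)}}\|_g = -\log m_g(Df|_{E^{cu}_x})$, where $m_g$ denotes the conorm in the metric $g$, condition \eqref{NUE2} is equivalent to
\[
\limsup_{n\to+\infty}\frac{1}{n}\sum_{j=0}^{n-1}\log m_g\!\big(Df|_{E^{cu}_{f^j(x)}}\big)>\epsilon, \qquad x\in H.
\]
Since $m(AB)\geq m(A)m(B)$, the sequence $n\mapsto \log m(Df^n|_{E^{cu}})$ is super-additive; Kingman's theorem then produces, for any ergodic $f$-invariant probability $\mu$, a constant $\lambda^-(\mu):=\lim_n \tfrac{1}{n}\log m(Df^n|_{E^{cu}_x})$ (defined $\mu$-a.e.) equal to the smallest Lyapunov exponent of $Df|_{E^{cu}}$ and independent of the Riemannian metric. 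Accordingly, \eqref{NUE2} in a metric $g'$ will follow if $g'$ is chosen so that $\log m_{g'}(Df|_{E^{cu}})$ approximates $\lambda^-$ in an appropriate Birkhoff sense.

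For the construction of $g'$, the natural candidate is a Mather-style adapted inner product: fix $\alpha>\epsilon$ (to be chosen) and a large $N$, and set
\[
\langle u,v\rangle'_x := \sum_{k=0}^{N-1} e^{-2k\alpha}\,\langle Df^k u, Df^k v\rangle_{f^k(x)}, \qquad u,v\in E^{cu}_x,
\]
extended to $TM$ by declaring $E^s\perp E^{cu}$ and leaving the metric on $E^s$ unchanged. A direct computation gives, for $v\in E^{cu}_x$,
\[
(\|Dfv\|'_{f(x)})^2 - e^{2\alpha}(\|v\|'_x)^2 = e^{-2(N-1)\alpha}\|Df^N v\|^2 - e^{2\alpha}\|v\|^2,
\]
so $m_{g'}(Df|_{E^{cu}_x})\geq e^\alpha$ whenever $m(Df^N|_{E^{cu}_x})\geq e^{N\alpha}$. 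Given any ergodic $f$-invariant $\mu$ with $\lambda^-(\mu)>\alpha$, Oseledets' theorem ensures that the inequality $m(Df^N|_{E^{cu}_x})\geq e^{N\alpha}$ holds $\mu$-a.e.\ for all $N$ large enough, whence $\int\log m_{g'}(Df|_{E^{cu}})\,d\mu\geq\alpha>\epsilon$. Consequently, provided $\lambda^-(\mu)>\epsilon$ for every ergodic $f$-invariant measure $\mu$ that arises as a weak-$*$ accumulation point of the empirical measures $\tfrac{1}{n}\sum_{j=0}^{n-1}\delta_{f^j(x)}$ for some $x\in H$, compactness of the space of invariant probabilities together with the lower semicontinuity of $\mu\mapsto\lambda^-(\mu)$ (which follows from Fekete's lemma) yields a uniform $\alpha\in(\epsilon,\inf_\mu\lambda^-(\mu))$ and a corresponding $N$; Birkhoff's theorem along the appropriate subsequence then delivers \eqref{NUE2} in the metric $g'$.

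The main obstacle is precisely the implication ``\eqref{poslyap} $\Longrightarrow$ $\lambda^-(\mu)>\epsilon$ for all relevant $\mu$''. Condition \eqref{poslyap} is a vectorwise $\limsup$ statement and, as the example in \cite[Section 4]{AlvAraSau} illustrates, does not in general control the smallest Lyapunov exponent of an abstract invariant cocycle. When $\mu(H)>0$, Oseledets' theorem applied at $\mu$-generic points of $H$ upgrades the $\limsup$ in \eqref{poslyap} to a genuine limit along every Lyapunov direction, yielding $\lambda^-(\mu)\geq\epsilon$; but the invariant measures seen by orbits of Lebesgue-generic $x\in H$ need not charge $H$ positively. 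Overcoming this step appears to require a new ingredient relating the non-invariant set $H$ to the invariant measures it generates via empirical averages, perhaps by exploiting the partial hyperbolicity of $f$ together with an inducing or absolute-continuity argument in the spirit of the Gibbs-Markov-Young structures built earlier in this paper.
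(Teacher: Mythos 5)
The statement you are asked to prove is stated in the paper as a \emph{conjecture}, not a theorem: the authors explicitly leave it open and note only that its validity would extend Theorem~\ref{thA} and Corollary~\ref{th:1} to the a priori weaker hypothesis \eqref{poslyap}. There is therefore no proof in the paper to compare against, and the honest admission at the end of your write-up --- that the argument cannot be closed as presented --- is itself the correct assessment.

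That said, your sketch is a sensible program and you have diagnosed the genuine obstruction precisely. The reduction of \eqref{NUE2} to a co-norm time-average condition is correct, the algebra in the identity $(\|Dfv\|'_{f(x)})^2 - e^{2\alpha}(\|v\|'_x)^2 = e^{-2(N-1)\alpha}\|Df^N v\|^2 - e^{2\alpha}\|v\|^2$ for the Mather-type adapted inner product checks out, and the scheme of controlling all ergodic invariant measures with $\lambda^-(\mu)>\epsilon$ via superadditivity, lower semicontinuity and compactness is exactly the tool used (e.g.\ by Cao and others) to convert non-vanishing exponents on a compact invariant set into uniform estimates. The missing step you identify is the real one: condition \eqref{poslyap} is imposed only on a positive-Lebesgue-measure set $H$ which is in general non-invariant and may be null for every $f$-invariant measure, so the $\limsup$ in \eqref{poslyap} cannot be upgraded to an Oseledets limit for the invariant measures that Lebesgue-generic orbits from $H$ actually equidistribute on. One additional technical caution worth flagging: the pointwise bound $m_{g'}(Df|_{E^{cu}_x})\ge e^{\alpha}$ holds only at those $x$ with $m(Df^N|_{E^{cu}_x})\ge e^{N\alpha}$ for the single $N$ chosen in building $g'$; passing from ``$\lambda^-(\mu)>\alpha$ for all relevant $\mu$'' to ``$\int\log m_{g'}(Df|_{E^{cu}})\,d\mu>\epsilon$ with one $N$ uniform in $\mu$'' requires a quantitative Oseledets/Egorov argument over the compact family of measures, not just the $\mu$-a.e.\ convergence you invoke for each $\mu$ separately.
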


If this conjecture is true, we will immediately obtain the conclusions of Theorem \ref{thA}
 and of Corollary \ref{th:1} under the a-priori weaker condition of positive Lyapunov exponents.

\subsection{GMY structures}

The strategy which we will use to prove the results above, and which is completely different from the
approach in \cite{ABV}, is the construction of certain  geometric structures whose existence is of independent interest. These geometric structures were introduced in \cite{You98} and have been   applied to study the existence and properties of physical measures in certain classes of dynamical systems.  We give here the precise definitions.

An embedded disk \( \gamma\subset M\) is called an
\emph{unstable manifold} if \( d(f^{-n}(x), f^{-n}(y))\to 0\) exponentially fast as \( n \to \infty\) for all \(x,y\in\gamma\); similarly  \( \gamma\subset M\) is called a
\emph{stable manifold} if \( d(f^{n}(x), f^{n}(y))\to 0\) exponentially fast as \( n \to \infty\).
 We say that $\Gamma^u=\{\gamma^u\}$ is a
\emph{continuous family of $C^1$ unstable manifolds} if there is a
compact set~$K^s$,  a unit disk $D^u$ of some $\RR^n$, and a map
$\Phi^u\colon K^s\times D^u\to M$ such that
\begin{itemize}
\item[i)] $\gamma^u=\Phi^u(\{x\}\times D^u)$ is an unstable
manifold;
\item[ii)] $\Phi^u$ maps $K^s\times D^u$ homeomorphically onto its
image; \item[iii)] $x\mapsto \Phi^u\vert(\{x\}\times D^u)$ defines a
continuous map from $K^s$ into $\text{Emb}^1(D^u,M)$.
\end{itemize}
Here   $\text{Emb}^1(D^u,M)$ denotes the space of $C^1$ embeddings from
$D^u$ into $M$.
Continuous families of $C^1$ stable manifolds are defined similarly.

We say that  a set $\Lambda\subset M$ has a \emph{hyperbolic product
structure} if there exist a continuous family of local unstable manifolds
$\Gamma^u=\{\gamma^u\}$ and a continuous family of local  stable manifolds
$\Gamma^s=\{\gamma^s\}$ such that
\begin{itemize}
    \item[i)] $\Lambda=(\cup \gamma^u)\cap(\cup\gamma^s)$;
    \item[ii)] $\dim \gamma^u+\dim \gamma^s=\dim M$;
    \item[iii)] each  $\gamma^s$ meets each $\gamma^u$  in exactly one point;
    \item[iv)] stable and unstable manifolds are transversal with angles bounded
    away from~0.
\end{itemize}
\noindent
If $\Lambda\subset M$ has a product structure,  we say that
$\Lambda_0\subset \Lambda$ is an {\em $s$-subset} if
$\Lambda_0$ also has a product structure and its defining
families $\Gamma_0^s$ and $\Gamma_0^u$ can be chosen with
$\Gamma_0^s\subset\Gamma^s$ and $\Gamma_0^u=\Gamma^u$; {\em
$u$-subsets} are defined analogously. For convenience we shall use the following notation: given $x\in\Lambda$, let
$\gamma^{*}(x)$ denote the element of $\Gamma^{*}$ containing $x$,
for $*=s,u$. Also, for each $n\ge 1$ let $(f^n)^u$ denote the restriction
of the map $f^n$ to $\gamma^u$-disks and let $\det D(f^n)^u$ be the
Jacobian of $D(f^n)^u$.

We say that \(  f  \) admits a  \emph{Gibbs-Markov-Young (GMY) structure}
if there exist  a set \(  \Lambda  \) with hyperbolic product structure and  constants  $C>0$ and
$0<\beta<1$,  depending on $f$ and $\Lambda$, satisfying the   following additional properties:


\begin{enumerate}
 \item[
    (P$_0$)] \emph{Detectable}: $\leb_\gamma(\Lambda)>0$ for each $\gamma\in\Gamma^u$.
        \item[
    (P$_1$)] \emph{Markov}: there are pairwise disjoint $s$-subsets $\Lambda_1,\Lambda_2,\dots\subset\Lambda$ such
    that
    \begin{enumerate}
 \item $\leb_{\gamma}\big((\Lambda\setminus\cup\Lambda_i)\cap\gamma\big)=0$ on each $\gamma\in\Gamma^u$;
 \item for each $i\in\NN$ there is $R_i\in\NN$ such that $f^{R_i}(\Lambda_i)$ is $u$-subset,
         and for all $x\in \Lambda_i$
         $$
         f^{R_i}(\gamma^s(x))\subset \gamma^s(f^{R_i}(x))\qand
         f^{R_i}(\gamma^u(x))\supset \gamma^u(f^{R_i}(x)).
         $$
    \end{enumerate}
\end{enumerate}


 \begin{enumerate}
\item[(P$_2$)] \emph{Contraction on stable leaves}: for all $\gamma^s\in\Gamma^s$, $x,y\in\gamma^s$ and $n\ge1 $
  $$\dist(f^n(y),f^n(x))\le C\beta^n .$$ 
 \end{enumerate}


 \begin{enumerate}
\item[(P$_3$)] \emph{Backward contraction on unstable leaves}:
for all $\gamma^u\in\Gamma^u$, $x, y \in \Lambda_i\cap\gamma^u$  and $0\leq n<R_i$
  $$
\dist(f^n(y),f^n(x))\le C\beta^{R_i-n}\dist(f^{R_i}(x), f^{R_i}(y)).
$$
 \end{enumerate}


 \begin{enumerate}
\item[(P$_4$)]  \emph{Bounded distortion}:  for all $\gamma^u\in\Gamma^u$ and $x, y \in \Lambda_i\cap\gamma^u$
  $$
\log\frac{\det D(f^{R_i})^u(x)}{\det D(f^{R_i})^u(y)}\leq C
\dist(f^{R_i}(x),f^{R_i}(y)).
$$
 \end{enumerate}


\begin{enumerate}
\item[(P$_5$)] \emph{Regularity of the foliations}:
\begin{enumerate}
 \item for all $\gamma^s\in\Gamma^s$, $x,y\in\gamma^s$ and $n\ge1 $
 $$
\log\prod_{i=n}^\infty\frac{\det Df^u(f^i(x))}{\det
Df^u(f^i(y))}\leq C \beta^n;
$$
 \item  given
$\gamma,\gamma'\in\Gamma^u$, we define
$\Theta\colon\gamma\cap\Lambda\to\gamma'\cap\Lambda$  by taking
$\Theta(x)$ equal to $\gamma^s(x)\cap \gamma'$. Then $\Theta$ is
absolutely continuous and
        $$\displaystyle \frac{d(\Theta_*\leb_{\gamma})}{d\leb_{\gamma'}}(x)=
        \prod_{i=0}^\infty\frac{\det Df^u(f^i(x))}{\det
        Df^u(f^i(\Theta^{-1}(x)))}.$$
\end{enumerate}
\end{enumerate}

 We
define a return time function
 $R: \Lambda\to\NN$ by $R|_{\Lambda_i}=
R_i$ and we say that the GMY structure has \emph{integrable return times} if  for
some (and hence all) $\gamma \in\Gamma^u$, we have
\begin{equation}\label{integrab_return}
    \int_{\gamma\cap\Lambda}R d\leb_\gamma <\infty.
\end{equation}

Most of the paper will be dedicated to the proof of the following result on the existence of GMY structures, from which we will deduce the other results, and whose interest goes beyond the applications presented here.

\cmt\label{thE}
Let \( f: M\to M \) be a  \( C^{1+} \)
diffeomorphism,  $K \subseteq M$   a forward invariant compact set on which
\(  f   \) is  partially hyperbolic with splitting \(  T_KM=E^s\oplus E^{cu}\), and $H \subseteq K$ a set with positive Lebesgue measure for which
$f$ is weakly non-uniformly expanding  along \(  E^{cu}\).
If there exists a  closed invariant transitive set \(  \Omega  \) such that
  \(  \omega(x)=\Omega \) for every \(  x\in H  \), then there exists a GMY structure \(  \Lambda\subseteq \Omega  \) with integrable return times.
  \fmt

 \subsection{Liftable measures}
 \label{s:lift}
Associated to a GMY structure we have an
 \emph{induced map} \(  F: \Lambda\to \Lambda  \)   defined by
\[
  F|_{\Lambda_{i}}= f^{R_{i}}|_{\Lambda_{i}}.
    \]
It is well known that  \(  F  \) has a unique  SRB measure \(  \nu   \).  Assuming  the integrability condition~\eqref{integrab_return} we can define the measure
\begin{equation}\label{lift.measure}
{\hat\mu}=
\sum_{j=0}^{\infty}f^j_{*}(\nu|\{  R >
j\}),
\end{equation}
which is a finite measure whose normalization \(\mu\)  is an
 SRB measure for \(  f  \), see \cite[Section 2]{You98}.

  Probability measures \( \mu \) obtained from GMY structures through  \eqref{lift.measure} often give a substantial amount of information on the statistical properties of the dynamics with respect to \(  \mu  \) such as decay of correlations, large deviations, limit theorems, etc.; see for instance \cite{You98, Gou05, MN05, MN08, RBY08}.
 This    motivates a general question as to whether any SRB measure \( \mu \) is of the form \eqref{lift.measure} for a suitable GMY structure. In this case we say that  \(  \mu  \) is \emph{liftable} (to a GMY structure).
As a consequence of the techniques used here we will obtain that for the partially hyperbolic systems considered in this paper, every SRB measure is liftable.

\cmt\label{thF}
Let \( f: M\to M \) be a  \( C^{1+} \)
diffeomorphism and  $K \subset M$   a compact  forward invariant set on which
\(  f   \) is  partially hyperbolic with splitting \(  T_{K}M=E^{s}\oplus E^{cu}  \).
 Then   \(  \mu  \) is an SRB measure with positive Lyapunov exponents in the \(E^{cu} \) direction supported on \(  K  \)   if and only  if \(  \mu  \) is liftable to a GMY structure on~\(  K  \).
\fmt

We emphasize that the assumption on the dynamics in center-unstable direction in the statement of  Theorem \ref{thF} is that the map has
\emph{positive Lyapunov exponents} as in \eqref{poslyap} rather than the stronger \emph{weak non-uniform expansion} as in \eqref{NUE2} which we need to assume instead for Theorem \ref{thA}, see also discussion following Corollary \ref{th:1}. In both cases we need to construct a Gibbs-Markov-Young structure with integrable return times which implies the existence of an SRB measure. The key difference however is that in Theorem \ref{thF} we already have an SRB measure by assumption and we just need to show that the two measures coincide. This allows us to prove, using a non-trivial argument, that some power of \(  f  \) is weakly non-uniformly expanding and thus apply the same construction as in the proof of Theorem \ref{thA}.

The paper is organized as follows.
In Section \ref{SectionErgodicComponents} we give an abstract criterion for verifying that
at most a finite number of topological attractors exist for a given set. In Section
\ref{sec:transatt} we show that this criterion is satisfied by the set \( H \) in Theorem \ref{thA}, thus proving item (1).
We then prove Theorem \ref{thE} in Sections \ref{s.partion}, \ref{s.measureHS}, \ref{GMY}
and \ref{intgrreturns}, which in particular applies to the sets \( \Omega\) obtained in item (1) of Theorem \ref{thA}.
 Combining the conclusion of Theorem \ref{thE} with the comments in the beginning of Section \ref{s:lift}
we get item (2) of Theorem \ref{thA} and the ``if'' direction of Theorem \ref{thF}. We prove the other implication of Theorem \ref{thF}  in Section \ref{lift}.

\section{Ergodic components}
\label{SectionErgodicComponents}

Let \(  X  \) be a compact metric space and \(  \mu  \) a Borel probability measure on \(  X  \).
Let \(  f: X \to X  \) be a measurable map, not necessarily preserving the measure $\mu$.
Given $x\in X$, the \emph{stable set} of $x$ is
\[
W^{s}(x)=\left\{y\in X:
\dist(f^j(x),f^j(y))\to0,\,\text{as $j\to\infty$}\right\}.
\]
Notice that the relation \(  x\sim  y  \) if and only if \(  y \in W^{s}(x)  \) defines an equivalence relation on  \(  X  \). In particular, we will use below the transitive property of this relation.
 If $U\subset X$, let
 \[
 W^{s}(U)=\bigcup_{x\in U}W^{s}(x).
 \]
 We recall that a set \(  U\subseteq X  \) is \emph{invariant} if \(  f^{-1}(U) = U  \).
 We now formulate a notion which is key to our argument.
We say that \(  Y \subseteq   X  \) is \( \mu\)-\emph{unshrinkable} if it is an  invariant set  with \(  \mu(Y) >0 \) and there exists a \(  \delta>0  \) such that for
every  invariant  set $U\subseteq Y$ we have
 \[
 \mu(U)>0 \quad \Rightarrow \quad \mu(W^{s}(U))>\delta.
 \]

\begin{Proposition} \label{PropositionFatErgodicAttractors}
Suppose \(  Y\subseteq X \) is \(  \mu  \)-unshrinkable. Then there
exists a finite number of  closed invariant subsets \(  \Omega_{1},...,\Omega_{\ell}   \) of \(  X  \) such that   for \(  \mu  \) almost every   \(  x\in Y  \) we have \(  \omega(x)=\Omega_{j}  \), for some \(  1\leq j \leq \ell  \).
\end{Proposition}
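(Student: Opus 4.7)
The plan is to partition $Y$ (up to $\mu$-null sets) into equivalence classes determined by the $\omega$-limit set, and then use the unshrinkability hypothesis to bound the number of classes with positive $\mu$-measure. The strategy is: the classes are invariant, stable saturations of distinct classes are disjoint, and unshrinkability forces each positive-measure class to have stable saturation of measure at least $\delta$; together these facts cap the number of classes with positive measure.

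First I would introduce on $Y$ the equivalence relation $x \approx y \iff \omega(x)=\omega(y)$; each equivalence class $A$ is $f$-invariant since $\omega(f(y))=\omega(y)$. Next I would record the key disjointness property: if two classes $A,A'$ have distinct $\omega$-limit sets then $W^s(A)\cap W^s(A')=\emptyset$. Indeed, any $z\in W^s(x)\cap W^s(y)$ forces $\dist(f^j(x),f^j(y))\to 0$ by the triangle inequality, so $y\in W^s(x)$; transitivity of $\sim$ (noted in the excerpt) then gives $\omega(x)=\omega(y)$. I would also note that the map $x \mapsto \omega(x)$ into the compact metric space $K(X)$ of nonempty closed subsets of $X$ (with the Hausdorff topology) is Borel measurable, via
\[
\omega(x)=\bigcap_{N\ge 1}\overline{\bigcup_{n\ge N}\{f^n(x)\}},
\]
so pullbacks of Borel subsets of $K(X)$ yield measurable invariant subsets of $Y$.

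To finish, suppose for contradiction that the pushforward measure $\omega_{*}(\mu|_Y)$ on $K(X)$ is not concentrated on finitely many atoms of total mass $\mu(Y)$. Then for any integer $n>1/\delta$ I can construct a Borel partition $C_1,\dots,C_n$ of $K(X)$ such that each $U_i:=\omega^{-1}(C_i)\cap Y$ satisfies $\mu(U_i)>0$: either group infinitely many atoms into $n$ groups, or split the nonatomic part of $\omega_*(\mu|_Y)$ into $n$ pieces of equal mass using standard regularity of Borel probability measures on compact metric spaces. Each $U_i$ is an invariant subset of $Y$ of positive measure, so unshrinkability of $Y$ gives $\mu(W^s(U_i))>\delta$, while the disjointness step gives $W^s(U_i)\cap W^s(U_j)=\emptyset$ for $i\neq j$. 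Hence
\[
1\ \ge\ \sum_{i=1}^n \mu(W^s(U_i))\ >\ n\delta,
\]
contradicting $n>1/\delta$. Therefore $\omega_{*}(\mu|_Y)$ is a finite atomic measure whose atoms $\Omega_1,\dots,\Omega_\ell$ (with $\ell\le\lfloor 1/\delta\rfloor$) are closed invariant subsets of $X$, and $\omega(x)\in\{\Omega_1,\dots,\Omega_\ell\}$ for $\mu$-a.e. $x\in Y$. The principal obstacle is this last step: one must confirm the Borel measurability of $\omega$ and justify splitting any nonatomic component of $\omega_*\mu$ into arbitrarily many pieces of positive measure by pulling back a Borel partition of the target; both reduce to standard facts about Borel measures on compact metric spaces.
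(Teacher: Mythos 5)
Your proof is essentially correct and takes a genuinely different route from the paper's. The paper proceeds via two lemmas: first, using Zorn's lemma on the partial order by strict inclusion on the family $\mathcal{F}(Y)$ of stable saturations to extract minimal elements, called ``$u$-ergodic components,'' and to show that $Y$ is covered ($\mu$ mod 0) by finitely many of them (the unshrinkability hypothesis enters by ruling out infinite strictly descending chains); second, proving that each $u$-ergodic component $S$ has a.e.-constant $\omega$-limit set via a direct covering argument with shrinking balls. You instead work directly with the level sets of the map $x\mapsto\omega(x)$, establishing disjointness of stable saturations of distinct level sets, and then running a pigeonhole argument on the pushforward $\omega_*(\mu|_Y)$ in $K(X)$: each piece of a Borel partition of $K(X)$ that carries positive mass pulls back to an invariant set with stable saturation of measure greater than $\delta$, which caps the number of pieces by $1/\delta$. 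What the paper's route buys is that it never needs the Borel measurability of $\omega$ or any splitting of a nonatomic pushforward measure — the Zorn step produces the finite decomposition purely from set arithmetic, and the covering argument then handles $\omega$-limit sets on each piece without invoking $K(X)$ at all. What your route buys is directness: one avoids the intermediate notion of $u$-ergodic component entirely, and the bound $\ell\le\lfloor 1/\delta\rfloor$ drops out transparently. The two technical points you flag — Borel measurability of $\omega: X\to K(X)$ and splitting a nonatomic Borel measure into arbitrarily many pieces of prescribed mass — are indeed standard, but it is fair to note that the paper deliberately sidesteps them, which matters if one wants the statement under the bare hypothesis that $f$ is merely measurable (as the paper states it); your measurability computation for $\omega$ goes through for measurable $f$, but the invariance of each atom $\Omega_j$ as an $\omega$-limit set quietly uses continuity of $f$, a gap shared with the paper's own proof and irrelevant to the diffeomorphism applications.
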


We will split the proof of Proposition \ref{PropositionFatErgodicAttractors} itself into two lemmas. To do this we need to introduce some additional concepts.
 We say that a set \(  S  \) is \emph{\(  s  \)-saturated} if $W^{s}(S)=S $.
 We say that $ S$ is a \emph{$u$-ergodic component} if it is  invariant,   \(  s  \)-saturated, and any subset \(  S'\subset S  \) which is also   invariant and  \(  s  \)-saturated, satisfies $\mu(S)=\mu(S')$ or $\mu(S')=0$.

\begin{Lemma}\label{LemmaCriteionForErgodicity}
Suppose  \(  Y\subseteq X \) is \(  \mu  \)-unshrinkable.
Then \(  Y  \) is contained (\( \mu\) mod 0) in the union  of
 a finite number of $u$-ergodic components.
\end{Lemma}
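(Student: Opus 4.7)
The plan is to combine the \(\mu\)-unshrinkability of \(Y\) with an atomic-decomposition argument on an auxiliary sub-\(\sigma\)-algebra. Write \(\delta\) for the constant from the unshrinkability hypothesis, set \(n=\lfloor 1/\delta\rfloor\), and let \(\mathcal{IS}\) denote the collection of \(\mu\)-measurable subsets of \(X\) that are simultaneously invariant and \(s\)-saturated. Since invariance and \(s\)-saturation are both inherited under complements, countable unions, and countable intersections, \(\mathcal{IS}\) is a sub-\(\sigma\)-algebra of the measurable sets.

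The first step is a combinatorial bound: any pairwise disjoint family \(S_{1},\dots,S_{k}\in\mathcal{IS}\) with \(\mu(S_{i}\cap Y)>0\) for every \(i\) must satisfy \(k\le n\). Indeed, each \(S_{i}\cap Y\) is an invariant subset of \(Y\) of positive \(\mu\)-measure, so unshrinkability yields \(\mu(W^{s}(S_{i}\cap Y))>\delta\); and since \(S_{i}\) is \(s\)-saturated and contains \(S_{i}\cap Y\), we have \(W^{s}(S_{i}\cap Y)\subseteq S_{i}\), which gives \(\mu(S_{i})>\delta\) and hence \(k<1/\delta\).

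Passing to the restricted algebra \(\mathcal{IS}|_{Y}\) on the probability space \((Y,\mu|_{Y})\), the bound from the previous step prevents \(\mathcal{IS}|_{Y}\) from being atomless on any positive-measure set, since an atomless algebra admits arbitrary bisection and hence arbitrarily many pairwise disjoint positive-measure elements. A standard iteration therefore produces a finite family \(\tilde A_{1},\dots,\tilde A_{m}\) of atoms of \(\mathcal{IS}|_{Y}\), with \(m\le n\), partitioning \(Y\) modulo \(\mu\); write each as \(\tilde A_{i}=S_{A_{i}}\cap Y\) with \(S_{A_{i}}\in\mathcal{IS}\).

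The final and crucial step is to upgrade each atom \(\tilde A_{i}\) to a genuine \(u\)-ergodic component of \(X\). Consider the family \(\mathcal{G}_{i}=\{S\in\mathcal{IS}:\tilde A_{i}\subseteq S\ \text{mod}\ \mu\}\), which is closed under countable intersections, and take \(\Omega_{i}\) to be the intersection of a decreasing sequence in \(\mathcal{G}_{i}\) realising \(\inf_{\mathcal{G}_{i}}\mu\); after replacing \(\Omega_{i}\) by \(\Omega_{i}\cap S_{A_{i}}\) we may assume \(\Omega_{i}\cap Y=\tilde A_{i}\) modulo \(\mu\). For any \(S'\in\mathcal{IS}\) with \(S'\subseteq\Omega_{i}\) the atom property of \(\tilde A_{i}\) yields a dichotomy: either \(\tilde A_{i}\subseteq S'\) mod \(\mu\), in which case \(S'\in\mathcal{G}_{i}\) and minimality forces \(\mu(S')=\mu(\Omega_{i})\); or \(\mu(S'\cap Y)=0\), in which case \(\Omega_{i}\setminus S'\in\mathcal{G}_{i}\) and minimality forces \(\mu(S')=0\). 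Hence each \(\Omega_{i}\) is a \(u\)-ergodic component, and since the \(\tilde A_{i}\) already cover \(Y\) modulo \(\mu\) the \(\Omega_{i}\) do too. The main obstacle is exactly this last step: one must bridge the gap between atoms of \(\mathcal{IS}|_{Y}\), which only record the behaviour of \(\mathcal{IS}\) on \(Y\), and the globally defined notion of a \(u\)-ergodic component; the minimisation over \(\mathcal{G}_{i}\) is precisely what prevents \(\Omega_{i}\) from admitting an invariant \(s\)-saturated subset whose positive mass lives entirely outside \(Y\).
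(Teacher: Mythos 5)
Your proof is correct, but it follows a genuinely different route from the paper's. The paper works directly with the family
\[
\mathcal{F}(Y) = \{\,W^{s}(U) : U\subseteq Y,\ f^{-1}(U)=U,\ \mu(W^{s}(U))>0\,\},
\]
proves a closure-under-difference property for $\mathcal F(Y)$, and invokes Zorn's lemma (together with the observation that every chain under strict inclusion must be finite, by unshrinkability) to extract a minimal element $W^{s}(U_{1})$, which is shown to be a $u$-ergodic component; then it sets $Y_{2}=Y\setminus W^{s}(U_{1})$ and iterates, the process terminating because each $u$-ergodic component constructed has measure at least $\delta$. You instead organise the invariant $s$-saturated sets into a sub-$\sigma$-algebra $\mathcal{IS}$, restrict to $Y$, rule out an atomless part via the same $\mu(S_i)>\delta$ count, and then carry each atom $\tilde A_i$ of $\mathcal{IS}|_Y$ up to a genuine $u$-ergodic component $\Omega_i$ of $X$ by minimising $\mu$ over $\mathcal G_i=\{S\in\mathcal{IS}:\tilde A_i\subseteq S\ \mathrm{mod}\ \mu\}$. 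Both arguments hinge on the same finiteness mechanism (no infinite disjoint family of positive-measure invariant $s$-saturated pieces meeting $Y$) and on a minimality argument, but yours avoids Zorn's lemma entirely: atoms of a sub-$\sigma$-algebra of a finite measure space exist for elementary reasons, and your $\Omega_i$ is realised as a countable intersection, so only countable choice is needed. Your two-stage structure --- intrinsic atomic decomposition of $\mathcal{IS}|_Y$, then extrinsic upgrade to components of $X$ --- is cleaner conceptually, at the cost of the extra minimisation step; the paper's approach, by working throughout with $W^{s}(U)$ for $U\subseteq Y$, never leaves the ``globally defined'' side and so has no analogue of your upgrade step, but instead pays with the chain-finiteness lemma and the set-theoretic bookkeeping around $W^{s}(U)\setminus W^{s}(U')$.

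One small technical remark: you assert $k\le n=\lfloor 1/\delta\rfloor$ from $k\delta<\sum\mu(S_i)\le 1$; this is fine, and indeed any finite bound would do, but it is worth noting explicitly that the disjointness of the $S_i$ in $\mathcal{IS}$ (rather than merely of the $S_i\cap Y$) is what you use, which you correctly arrange by passing to $S_i'=S_i\setminus\bigcup_{j<i}S_j$ implicitly when transferring from atoms of $\mathcal{IS}|_Y$.
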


\dem
Let $Y_1 = Y$ and let
\[
\mathcal{F}(Y_1):=\{W^{s}(U)\,: \,U\subseteq Y_1, \ f^{-1}(U)=U,  \text{ and } \mu(W^{s}(U))> 0 \,\}
\]
Note that  $\mathcal{F}(Y_1)$ is
non-empty because $W^{s}(Y_1)\in\mathcal{F}(Y_1)$.  Moreover, we claim that
 \begin{equation}\label{eq:diff}
   W, W' \in  \mathcal{F}(Y_1) \qand
     \mu (W\setminus W') >0  \quad
     \Rightarrow
     \quad
       W\setminus W'\in \mathcal{F}(Y_1)
     \end{equation}
To see this, let \(  U, U'\subseteq Y_{1}  \) be invariant sets such that \(  W=W^{s}(U)\) and \( W'=W^{s}(U')  \). We claim that
\begin{equation}\label{eq:diff1}
  W\setminus W'
= W^{s}(U\setminus W^{s}(U')),
\end{equation}
Notice that    \(  U\setminus W^{s}(U') \subseteq Y_{1}  \), and also \(  U\setminus W^{s}(U')    \) is invariant
because both  \(  U, W^{s}(U')  \) are  invariant. Therefore
\eqref{eq:diff1} implies \eqref{eq:diff}.
To prove \eqref{eq:diff1}, we prove first of all that \(   W\setminus W'
\subseteq  W^{s}(U\setminus W^{s}(U')). \) Suppose \(  x\in W\setminus W'  \), i.e. \(  x\in W^{s}(U)  \) and \(  x\notin W^{s}(U')  \). This means that there exists \(  u\in U  \) such that \( x\in W^{s}(u) \) and also that  \(  x\notin W^{s}(u')  \) for any \(  u'\in U'  \), which implies that \(  x\notin W^{s}(z)  \) for any \(  z\in W^{s}(U')  \) by the transitivity of the equivalence relation \(  \sim  \) mentioned above. This proves the first inclusion. To prove
\(   W\setminus W'
\supseteq  W^{s}(U\setminus W^{s}(U')) \), let \(  x\in W^{s}(U\setminus W^{s}(U'))   \). Then clearly \(  x\in W  \) and
\(  x\in W^{s}(y)  \) for some \(  y\in U\setminus W^{s}(U')  \).
 It just remains to show that \(  x\notin W'  \).  Arguing by contradiction, suppose that \(  x\in W'=W^{s}(U')  \), then \(  x\in W^{s}(u')  \) for some \(  u'\in U'  \) and so, as \(  x \sim y  \), we have \(  y\in W^{s}(u')  \) which contradicts the fact that
\(   y\in U\setminus W^{s}(U')   \). This completes the proof of \eqref{eq:diff1} and hence of \eqref{eq:diff}.

Now consider the partial order on $\mathcal{F}(Y_1)$ defined by \emph{strict inclusion}, meaning that
\(  W \succ W'  \) if \(  W\supset W'  \)  and \(  \mu(W\setminus W')>0  \).
We claim that for this partial order relation,
every  totally ordered subset of $\mathcal{F}(Y_1)$ is finite,
and   in particular  it has a lower  bound.
Indeed, arguing by contradiction, suppose that there is an infinite sequence
\(  W_1\succ W_2\succ \cdots \) in $\mathcal{F}(Y_1)$, i.e.
$ W_1\supset W_2\supset\cdots$   with
$\mu(W_{k}\setminus W_{k+1})>0$, for all $ k\geq 1$.
Then  $$\sum_{k\ge1}\mu(W_{k}\setminus W_{k+1})=\mu(W_{1})<\infty,$$
and therefore  \(  \mu(W_{k}\setminus W_{k+1}) \to 0  \) as \(  k\to \infty  \).
Since $W_k\setminus W_{k+1}\in \cf(Y_1)$ by \eqref{eq:diff},  this contradicts our assumptions that \(  Y_{1}=Y  \) is \(  \mu  \)-unshrinkable. This shows that every totally ordered subset of $\mathcal{F}(Y_1)$ has a lower bound. Thus by Zorn's
Lemma
there exists at least one minimal element $W^{s}(U_1)\in\mathcal{F}(Y_1)$,
which therefore must necessarily be a $u$-ergodic component.

 We now let  $Y_2:=Y_{1}\setminus W^{s}(U_1)$, which is again invariant. If \(  \mu(Y_{2})=0  \) then \(  Y=Y _{1} \) is essentially contained in \(  W^{s}(U_{1})  \), which is a \( u  \)-ergodic component,
  and thus we are done. On the other hand, if \(  \mu(Y_{2})>0  \) we can repeat the entire argument above to obtain a
  set \(  U_{2}\subseteq Y_{2}  \) and a \(  u  \)-ergodic component \(  W^{s}(U_{2})  \).
 Inductively, we then  construct a collection of  disjoint
$u$-ergodic components $W^{s}(U_1),..., W^{s}(U_r)$ and continue as long as  $\mu(Y\setminus W^{s}(U_1)\cup
...\cup W^{s}(U_r))>0$. But, as $\mu(W^{s}(U_j))\geq \delta$ for all \(  1\leq j \leq r  \) by the assumption that \(  Y  \) is \(  \mu  \)-unshrinkable,  this process will stop and we will get the  conclusion.
\cqd

\begin{Lemma}\label{PropositionErgodicComponents2}
Suppose  $ S\subseteq X$ is a $u$-ergodic component. Then  there exists a closed invariant set
$\Omega\subseteq X$  such that ${\omega}(x)=\Omega$ for \(  \mu  \)-almost every
$x\in S$.
\end{Lemma}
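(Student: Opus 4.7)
The plan is to exploit the fact that a closed subset of a compact metric space is uniquely determined by the collection of basic open sets that it meets, and to use $u$-ergodicity of $S$ to fix this collection on a full-measure subset of $S$.

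Concretely, since $X$ is a compact metric space, fix a countable basis $\{U_n\}_{n\in\NN}$ for its topology and, for each $n\in\NN$, set
\[
A_n=\{x\in X : \omega(x)\cap U_n\neq\emptyset\}=\bigcap_{N\geq 0}\bigcup_{k\geq N} f^{-k}(U_n).
\]
The second expression makes invariance of $A_n$ under $f$ manifest, and $A_n$ is also $s$-saturated: if $y\in W^{s}(x)$, then $\dist(f^{j}(x),f^{j}(y))\to 0$ implies $\omega(x)=\omega(y)$, so $A_n$ is a union of stable equivalence classes. A routine check (using the transitivity of $\sim$ already exploited in the previous lemma) then shows that the intersection of an invariant $s$-saturated set with $S$ remains invariant and $s$-saturated, so each $A_n\cap S$ is an invariant $s$-saturated subset of $S$. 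By the defining property of a $u$-ergodic component, $\mu(A_n\cap S)\in\{0,\mu(S)\}$.

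Set $I=\{n\in\NN : \mu(A_n\cap S)=\mu(S)\}$ and remove a countable collection of null sets to obtain
\[
S'=S\setminus\bigcup_{n\notin I}(A_n\cap S),\qquad \mu(S')=\mu(S).
\]
By construction, for every $x\in S'$ we have $\omega(x)\cap U_n\neq\emptyset$ if and only if $n\in I$. I then claim that
\[
\Omega:=X\setminus\bigcup_{n\notin I} U_n
\]
coincides with $\omega(x)$ for every $x\in S'$. The inclusion $\omega(x)\subseteq\Omega$ is immediate from the definition of $I$. For the reverse inclusion, any $z\in\Omega$ has the property that every basic neighbourhood $U_n\ni z$ satisfies $n\in I$ and hence meets $\omega(x)$; since $\omega(x)$ is closed, this forces $z\in\omega(x)$. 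Finally, $\Omega$ is closed and invariant because each individual $\omega(x)$ is.

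I expect the only delicate step to be the verification that $A_n$ is $s$-saturated and that this property descends to $A_n\cap S$; both hinge on the transitivity of stable equivalence, exactly as in the proof of Lemma \ref{LemmaCriteionForErgodicity}. Compactness of $X$ guarantees $\omega(x)\neq\emptyset$ for every $x$, so no degenerate $\Omega$ can arise, and the argument produces the single closed invariant set required.
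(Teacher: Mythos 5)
Your strategy mirrors the paper's key idea --- the set of points whose $\omega$-limit set meets a given open set is invariant and $s$-saturated, so $u$-ergodicity forces its $\mu$-measure in $S$ to be $0$ or $\mu(S)$ --- but you organise it around a fixed countable basis and extract $\Omega$ in one stroke, whereas the paper uses a sequence of finite covers of decreasing mesh and builds $\Omega$ as a nested intersection of compact sets. Either packaging works.

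There are, however, two defects to fix. First, the claimed identity $A_n=\bigcap_{N\geq 0}\bigcup_{k\geq N}f^{-k}(U_n)$ is not correct: the right-hand side also contains points whose orbit visits $U_n$ infinitely often but with the visits accumulating only on $\partial U_n$, so that $\omega(x)\cap U_n=\emptyset$ yet $x$ lies in the right-hand side (e.g.\ $X=[-1,1]$, $f(x)=x/2$, $U_n=(0,1)$, $x=1$). This does not break the proof, since invariance of $A_n$ follows at once from $\omega(f^{\pm1}(x))=\omega(x)$, but the given justification is wrong. Second, and more seriously, the definition $S'=S\setminus\bigcup_{n\notin I}(A_n\cap S)$ only yields the implication ``$\omega(x)\cap U_n\neq\emptyset\Rightarrow n\in I$'' for $x\in S'$. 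The converse --- which your reverse inclusion $\Omega\subseteq\omega(x)$ depends on --- is not guaranteed: $n\in I$ says $\mu(S\setminus A_n)=0$, not that your particular $x$ belongs to $A_n$. You must also delete the null sets $S\setminus A_n$ for $n\in I$, i.e.\ set
\[
S' \;=\; S\setminus\left(\bigcup_{n\notin I}(A_n\cap S)\;\cup\;\bigcup_{n\in I}(S\setminus A_n)\right).
\]
With these two corrections the argument is complete.
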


\dem
 Given any open set $B\subset X$,
let
\[
B_{\omega} :=\{x\in S: \omega(x)\cap B\ne\emptyset\}.
\]
Then \(   B_{\omega}  \) is invariant and \(  s  \)-saturated and therefore, by the assumption that \(  S  \) is \(  u  \)-ergodic,   \(  \mu (B_{\omega})=0  \) or \( \mu(B_{\omega})= \mu(S)  \).
Now, let $Z_1=X$ and $\mathcal{C}_1$ be any finite  covering  of $X$ by open balls of radius  $1$. By the previous considerations, for every \(  B\in\mathcal {C}_{1}  \) we have \( \mu (B_{\omega})=0  \) or \( \mu (B_{\omega})=\mu(S)  \) and therefore, since we only have a finite number of elements in \(  \mathcal {C}_{1}  \),
   there exists  at least one \(  B_{\omega}\in \mathcal {C}_{1}  \) such that \(  \mu  (B_{\omega})= \mu(S)  \).
 Let
 \[
 \mathcal C_{1}'=\{B\in\mathcal C_1: \mu (B_{\omega})=0\} \qand
 Z_2=Z_1\setminus\bigcup_{B\in\mathcal C_1'}B.
 \]
 Then  \(  Z_{2}  \) is a non-empty compact  set and \(  \omega(x)\subseteq Z_{2}  \) for \(  \mu  \)-almost every \(  x\in S  \).  We can therefore repeat the procedure with a finite cover
 $\mathcal C_{2}$ of $Z_{2}$ by  open balls of radius  $1/2$, and,
by induction, construct sequences $ \mathcal C_1,\mathcal C_2,\dots$, $ \mathcal C_1',\mathcal C_2',\dots$  and $Z_1, Z_2,\dots$
such that $Z_1\supset Z_2\supset ...$ is a
sequence of non-empty compact sets and
$\omega(x)\subset Z_{j}$ for  almost every $x\in S$.
In particular we have
\[
\omega(x)\subseteq {\Omega} :=\bigcap_{n\geq 1} Z_n
\]
It just remains to show that \(  \Omega  \subseteq\omega(x) \) for \(  \mu  \) almost every \(  x\in S  \).
Indeed, given  \(  y\in \Omega  \) we have that   \(  y \in  Z_{n}  \)
for every \(  n\geq 1  \),  and therefore there is some \(  B^{(n)}\in \mathcal C_{n}\setminus \mathcal C_{n}'  \)
such that \( y\in  B^{(n)}   \).
Since \(  \diam(B^{(n)})\to 0  \)  as $n\to\infty$, this implies that   $\bigcap_n B^{(n)}=\{y\}$.
Moreover, as \(  B^{(n)}\in \mathcal C_{n}\setminus \mathcal C_{n}'  \) we have that
 \(  \mu(B^{(n)}_{\omega})= \mu(S)  \) and therefore
 \(  \omega(x)\cap B^{(n)}\neq \emptyset  \)    for \(  \mu  \) almost all \(  x\in S  \).  This implies that
  \(  y\in\omega(x)  \)
    for \(  \mu  \) almost all \(  x\in S  \) and, as \(  \omega(x)  \) is closed and invariant, the statement follows.
\cqd

\section{Transitive attractors}\label{sec:transatt}

Here we prove the topological part of Theorem \ref{thA}.
We assume  throughout this section the assumptions of that theorem. Let
 \( f: M\to M \) be a  \( C^{1+} \)
diffeomorphism,  $K \subset M$   a forward invariant compact set on which
\(  f   \) is  partially hyperbolic, and $H \subseteq K$ a set with \(  \leb (H)>0  \) on which
$f$ is weakly non-uniformly expanding  along \(  E^{cu}\). The main result of this section is the following proposition.

\begin{Proposition}\label{thmAtop}
There exist  closed invariant  sets  \(  \Omega_{1},...,\Omega_{\ell}  \subseteq K\)
such that
  for Lebesgue almost every \(  x\in H  \) we have
  \(  \omega(x)=\Omega_{j}  \) for some  \(  1\leq j\leq \ell \). Moreover, each \(  \Omega_{j}  \) is transitive and contains a \(  cu  \)-disk \(  \Delta_{j}  \) of radius \(  \delta_{1}/4  \) on which \(  f  \) is weakly non-uniformly expanding along \(  E^{cu}  \) for \(  \leb_{\Delta_{j}}  \) almost every point in \(  \Delta_{j}  \).
\end{Proposition}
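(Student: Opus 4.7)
The plan is to reduce Proposition \ref{thmAtop} to the abstract criterion of Proposition \ref{PropositionFatErgodicAttractors} by verifying that $H$ is $\leb$-unshrinkable, and then to upgrade the closed invariant sets $\Omega_j$ thereby produced to transitive sets each containing a $cu$-disk of radius $\delta_1/4$ on which weak non-uniform expansion persists.

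For unshrinkability, let $U\subseteq H$ be an invariant set with $\leb(U)>0$ (so in particular $f(U)\subseteq U$). I would pick a point $x\in U$ which is a Lebesgue density point of $U$ on the $cu$-disk $D$ through $x$; such $x$ form a full $\leb$-measure subset of $U$ thanks to the absolute continuity of the stable holonomy. The weak non-uniform expansion of $x$ along $E^{cu}$ yields, via a Pliss-type argument, an infinite sequence of $(c,\delta_1)$-hyperbolic times $n_k\to\infty$; at each such $n_k$ there is a pre-disk $V_{n_k}(x)\subseteq D$, with $\diam V_{n_k}(x)\to 0$, on which $f^{n_k}$ is a diffeomorphism onto a $cu$-disk $B_{n_k}$ of uniform radius $\delta_1$ about $f^{n_k}(x)$, with uniformly bounded distortion. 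The density of $x$ combined with bounded distortion implies that the proportion of $B_{n_k}$ lying in $f^{n_k}(U\cap V_{n_k}(x))\subseteq U$ tends to one. A Fubini-type argument, using that local stable manifolds have length bounded below by some $\sigma>0$ and that $E^s$ and $E^{cu}$ are uniformly transverse, then gives $\leb(W^s(U))\ge\delta$ for a constant $\delta>0$ depending only on $\delta_1$, $\sigma$ and the angle bound. This verifies unshrinkability, and Proposition \ref{PropositionFatErgodicAttractors} produces closed invariant sets $\Omega_1,\ldots,\Omega_\ell\subseteq K$ with $\omega(x)=\Omega_j$ for $\leb$-a.e.\ $x\in H$.

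To produce the disk $\Delta_j$, fix $j$ and a point $x\in H$ with $\omega(x)=\Omega_j$. The disks $B_{n_k}$ from the previous step are graphs over a fixed model disk with $C^1$-derivatives uniformly Lipschitz, thanks to the continuity of $E^{cu}$ over the compact set $K$; by Arzel\`a--Ascoli a subsequence converges in $C^1$ to a $cu$-disk of radius $\delta_1$, and restricting to its central portion yields a compactly embedded $cu$-disk $\Delta_j$ of radius $\delta_1/4$. Each $y\in\Delta_j$ is a limit $\lim_k f^{n_k}(w_k)$ with $w_k\in V_{n_k}(x)$ and $n_k\to\infty$, so $y\in\omega(x)=\Omega_j$, giving $\Delta_j\subseteq\Omega_j$. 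For the weak non-uniform expansion on $\Delta_j$, I would show that for $\leb_D$-a.e.\ $w\in V_{n_k}(x)$ the backward contraction at time $n_k$ propagates the hyperbolic-time property from $x$ to $w$, yielding uniform backward-contraction and distortion estimates along subsequences of forward iterates of $w$; passing to the limit under the $C^1$-convergence of the $B_{n_k}$ then transfers the $\liminf$ condition \eqref{NUE2} from $\leb_D$-typical $w$ to $\leb_{\Delta_j}$-typical $y\in\Delta_j$.

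Finally, the topological transitivity of $\Omega_j$ is deduced by applying the unshrinkability argument of the first step to the weakly NUE set inside $\Delta_j$: it yields that $\omega(y)=\Omega'$ for some closed invariant $\Omega'\subseteq\Omega_j$ for $\leb_{\Delta_j}$-a.e.\ $y\in\Delta_j$. That $\Omega'=\Omega_j$, and hence that $y$ has forward orbit dense in $\Omega_j$, follows because $\Omega_j$ cannot split properly as a disjoint union of two closed invariant sets, by the minimality clause built into the $u$-ergodic component construction in Lemma \ref{LemmaCriteionForErgodicity}. The main obstacle is in the preceding step: because weak NUE is only a $\liminf$ condition, and hyperbolic times of points in the pre-disks $V_{n_k}(x)$ need not synchronize with those of $x$, the passage of \eqref{NUE2} to the limit disk $\Delta_j$ must be handled carefully via uniform backward-contraction estimates at hyperbolic times together with a diagonal extraction argument.
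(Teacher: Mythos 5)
Your first step — establishing unshrinkability via Lebesgue density points, Pliss hyperbolic times, bounded distortion on the images of hyperbolic pre-disks, and stable holonomy — is essentially the paper's Lemma~\ref{le.unshrink}. One small technical point: Proposition~\ref{PropositionFatErgodicAttractors} requires the ambient set $Y$ to be \emph{invariant}, and $H$ need not be; the paper replaces $H$ by $\widetilde H=\bigcup_{n\in\mathbb Z}f^n(H)$ and then reduces to forward-invariant $U\subseteq K$. That is easily fixed in your write-up.

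The genuine gap is in the step $\Delta_j\subseteq\Omega_j$. You claim that since each $y\in\Delta_j$ equals $\lim_k f^{n_k}(w_k)$ with $w_k\in V_{n_k}(x)$, we get $y\in\omega(x)$. This does not follow: $\omega(x)$ is the set of accumulation points of the orbit of the single point $x$, while your points $w_k$ vary. Even though $\diam V_{n_k}(x)\to 0$ so $w_k\to x$, the maps $f^{n_k}$ blow $V_{n_k}(x)$ up to $cu$-disks of \emph{fixed} radius $\delta_1$, so $\dist(f^{n_k}(w_k),f^{n_k}(x))$ can be as large as $\delta_1/4$ and the limit $y$ is only known to lie in a $\delta_1/4$-neighbourhood of $\Omega_j$, not in $\Omega_j$. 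The paper avoids this by applying the hyperbolic-time/density construction not to $H$ itself but to the sets $A^{(n)}=\{x\in H:\dist(f^k(x),\Omega)\le 1/n\ \forall k\ge 0\}$: because \emph{all} forward iterates of points in $A^{(n)}$ stay $1/n$-close to $\Omega$, the resulting hyperbolic disks, and hence their uniform limit $\Delta^{(n)}$, lie in the $1/n$-neighbourhood of $\Omega$; a second limit in $n$ then gives $\Delta\subseteq\Omega$. You need some device of this kind — passing to a limit along a single point's hyperbolic times is not enough.

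Two further remarks. First, the paper's mechanism for transferring weak NUE to $\Delta_j$ is different from, and considerably cleaner than, the backward-contraction/diagonal-extraction route you flag as the main obstacle: weak NUE (\ref{NUE2}) is an asymptotic property, hence constant on stable sets; for large $k$ the disks $D^{(n)}_k$ cross the stable foliation through $\Delta^{(n)}$, and absolute continuity of that foliation then pushes the full-density NUE set on $D^{(n)}_k$ to a full-measure set on $\Delta^{(n)}$. There is no synchronization issue to resolve. Second, your transitivity argument does not go through as stated: the minimality built into the $u$-ergodic component construction is minimality among $s$-saturated invariant sets of positive volume, and that does not translate into the topological indecomposability of $\Omega_j$ you invoke. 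The paper's argument is more elementary: take $x\in H$ with $\omega(x)=\Omega_j$; some forward iterate of $x$ lands on the local stable manifold of a point $z\in\Delta_j\subseteq\Omega_j$; since points on a common stable manifold have the same $\omega$-limit, $\omega(z)=\Omega_j$, and the forward orbit of $z$ lies in $\Omega_j$ by invariance, so it is dense in $\Omega_j$.
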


We first prove some preliminary lemmas.
We remark that \(  K  \) is not assumed to contain any open sets. We therefore
 fix continuous extensions of the
two sub-bundles $E^{s}$ and $E^{cu}$ to some compact neighborhood $ V$
of $K$, that we still denote $E^{s}$ and $E^{cu}$.
We do not require these extensions to be \(  Df  \) invariant.
Given $0<a<1$, we define the {\em centre-unstable cone field
$C_a^{cu}=\left(C_a^{cu}(x)\right)_{x\in V}$ of width $a$\/} by
\begin{equation}
\label{e.cucone} C_a^{cu}(x)=\big\{v_1+v_2 \in E_x^{s}\oplus
E_x^{cu}: \|v_1\| \le a \|v_2\|\big\}.
\end{equation}
We define the {\em stable cone field
$C_a^{s}=\left(C_a^{s}(x)\right)_{x\in V}$ of width $a$\/} in a
similar way, just reversing the roles of the sub-bundles in
(\ref{e.cucone}). We fix $a>0$ and $V$ small enough so that
 the domination condition
\eqref{domination1}  remains valid   in the
two cone fields:
$$
\|Df(x)v^{s}\|\cdot\|Df^{-1}(f(x))v^{cu}\|
<\lambda\|v^{s}\|\cdot\|v^{cu}\|
$$
for every $v^{s}\in C_a^{s}(x)$, $v^{cu}\in C_a^{cu}(f(x))$ and
any point $x\in V\cap f^{-1}(V)$. Note that the centre-unstable cone
field is forward invariant:
 $Df(x) C_a^{cu}(x)\subset
C_a^{cu}(f(x)),$ whenever $x,f(x)\in V$. Actually, the
domination property together with the invariance of ${E}^{cu}
|_{K}$ imply that
$
Df(x) C_a^{cu}(x) \subset C_{\lambda a}^{cu}(f(x))
                  \subset C_a^{cu}(f(x)),
$
for every $x\in K$, and this extends to any $x\in V\cap f^{-1}(V)$
just by continuity.

 Given $0<\sigma<1$, we say that $n$ is a
{\em $\sigma$-hyperbolic time\/} for  $x\in K$ if
$$
\prod_{j=n-k+1}^{n}\|Df^{-1} \mid E^{cu}_{f^{j}(x)}\| \le \sigma^k,
\qquad\text{for all $1\le k \le n$.}
$$
The next result gives the existence of (infinitely many) $\sigma$-hyperbolic times
for points satisfying the weak nonuniform expansion condition~\eqref{NUE2}.
For a proof   see \cite[Corollary~5.3]{AP}.
\cle\label{l:hyperbolic2}
   There are
     $\sigma > 0$ and $\theta > 0$  such that  if  \eqref{NUE2} holds  for  $x \in K$, then
\begin{equation*}
\limsup _{n\to\infty}\frac{1}{n}\#\{1\leq j\leq n: j \text{ is a }
\sigma\text{-hyperbolic time for }  x\} \geq
 \theta.
 \end{equation*}
\fle

Under the stronger assumption \eqref{NUE1}, it was proved in \cite{ABV} that there is positive frequency at infinity of hyperbolic times, which means taking ``$\liminf$'' instead of ``$\limsup$'' in Lemma \ref{l:hyperbolic2}. The positive frequency of $\sigma$-hyperbolic times at infinity plays a crucial role in the argument used in \cite[Corollary 3.2]{ABV} to prove the existence of SRB measures, and this is the reason why we cannot use those arguments here.

Hyperbolic times are defined pointwise but, as we shall see below, some important properties can be derived  for a neighbourhood of the reference point at a hyperbolic time. From now on we fix \(\sigma\) and \(\theta\) as in Lemma
\ref{l:hyperbolic2}.
Now observe that,
by continuity of the derivative,
we can choose   $a>0$ and $\delta_1>0$ sufficiently  small so that the
$\delta_1$-neighborhood of $K$ is contained in $V$ and
\begin{equation}\label{e.delta1}
 \|Df^{-1}(f(y)) v \| \le {\sigma^{-1/4}}
\|Df^{-1}|E^{cu}_{f(x)}\|\,\|v\|
\end{equation}
for all  $x\in K, y\in V$ with $\dist(x,y)\le \delta_1$ and $v\in
C^{cu}_a(y)$.  From now on we fix these values of \( a, \delta_1\) so that \eqref{e.delta1} holds.

We say that an embedded $C^1$ submanifold $D\subset V$ is a {\em
\( cu \)-disk}  if the tangent
subspace to~$D$ at each point $x\in D$ is contained in the
corresponding cone $C_a^{cu}(x)$. Then $f(D)$ is also a \(  cu  \)-disk, if it is contained in $V$, by the
domination property.
Given any disk $D\subset M$, we use $\dist_D(x,y)$ to
denote the distance between $x,y\in D$, measured along
$D$.

\cle \label{l.contraction2} Let $D$ be a $cu$-disk. There exists  $C_{1}>1$  such that if $n$ is a $\sigma$-hyperbolic time for $x\in K\cap D$, then
 there exists a neighbourhood \(  V_{n}^{+}(x)  \) of \(  x  \) in $D$ such that
 $f^{n}$ maps $V_n^{+}(x)$ diffeomorphically onto a
\(  cu  \)-disk  \(  B^u_{2\delta_1}(f^n(x))   \) of radius $2\delta_1$ around  $f^{n}(x)$. Moreover,  for every $1\le k
\le n$ and $y, z\in V_n(x)^+$ we have

\begin{enumerate}
    \item
$\displaystyle
\dist_{f^{n-k}(V^+_n(x))}(f^{n-k}(y),f^{n-k}(z)) \le
\sigma^{3k/4}\dist_{f^n(V^+_n(x))}(f^{n}(y),f^{n}(z));
$
\item
$\displaystyle
\log \frac{|\det Df^{n} \mid T_y D|}
                     {|\det Df^{n} \mid T_z D|}
            \le C_{1} \dist_{f^n(D)}(f^{n}(y),f^{n}(z));
$
\item for any
Borel sets $X, Y\subset V^+_n(x)$,
$$
\frac{\leb_{f^{n}(V^+_{n}(x))}(f^n(X))}{\leb_{f^{n}(V^+_{n}(x))}(f^n(Y))} \leq C_1\frac{\leb_{V^+_{n}(x)}(X)}{\leb_{V^+_{n}(x)}(Y)}.
$$
\end{enumerate}
\fle
The first two items are  proved in
\cite[Lemma 2.7 \&  Proposition 2.8]{ABV}, and the third item is a standard consequence of the second one.
Notice   that  the factor  $\sigma^{3/4}$ in the first item of this lemma  differs from the factor $\sigma^{1/2}$ in \cite[Lemma 2.7]{ABV} simply because we have chosen $\delta_1>0$ sufficiently small so that \eqref{e.delta1} holds, contrarily to estimate (6) in \cite{AP} where  $\delta_1>0$ is chosen so that a similar conclusion holds with $\sigma^{1/2}$ in the place  of  $\sigma^{1/4}$.

\cre\label{re.bounded}
Notice that if we replace the assumption that \( n \) is a \(\sigma\)-hyperbolic time in   Lemma~\ref{l.contraction2}  with the assumption that \( n \) is a \(\sigma^\alpha\)-hyperbolic time for some \(\alpha>1/4\), then the conclusions of the lemma continue to hold with \(\sigma^{\alpha-1/4}\) instead of \(\sigma^{3/4}\) in item (1), where the term \(1/4 \) comes from \eqref{e.delta1}.
\fre

Now we   define
\(
  V_{n}(x)\subseteq V^{+}_{n}(x),
  \)
where $f^{n}(V_{n}(x))=  B^u_{\delta_1}(f^n(x))
$
is the \(  cu  \)-disk of radius \(  \delta_{1}  \) around \(  f^{n}(x)  \) contained in \( B^u_{2\delta_1}(f^n(x))\) as in Lemma \ref{l.contraction2}.
The
sets \( V_n(x) \)  are called \emph{hyperbolic pre-disks} and their
images \( f^{n}(V_n(x)) \)  \emph{hyperbolic disks}.   The following result is proved in \cite[Proposition 5.5]{AP}.

\cle \label{posdens}
Let \(  D \) be a \(  cu  \)-disk and \(  U\subseteq H  \) with \(  \leb_{D}(U)>0  \).
Then there exists a sequence of sets \(  ...\subseteq W_{2}\subseteq W_{1}\subseteq D  \)
and a sequence of integers \(  n_{1} < n_{2}<\cdots   \) such that:
\begin{enumerate}
\item \(  W_{k}  \) is contained in some hyperbolic pre-disk with hyperbolic time \(  n_{k}  \);
\item \(  D_{k}:=f^{n_{k}}(W_{k})  \) is a \(  cu  \)-disk of radius \(  \delta_{1}/4  \);
\item
\(\displaystyle
\lim_{k\to\infty} \frac{\leb_{D_{k}}f^{n_{k}}(U\cap D)}{\leb_{D_{k}}(D_{k})}=1
\).
\end{enumerate}

\fle

Now we are in conditions to prove Proposition~\ref{thmAtop}. We define
\[
\widetilde H:= \bigcup_{n\in \mathbb Z} f^{n}(H).
\]
Then \(  \widetilde H  \) is clearly invariant and \(  \leb(\widetilde H)>0  \).

\cle \label{le.unshrink}
\( \widetilde H  \) is \(  \leb  \)-unshrinkable.
\fle
\dem
To prove that \(  \widetilde H  \) is \(  \leb  \)-unshrinkable it is   sufficient to show that there exists \(  \delta>0  \) such that for every \(  f  \)-invariant set
 \(  U\subseteq \widetilde H  \) with  \(  \leb(U)>0  \) we have \(  \leb(W^{s}(U))> \delta  \).
 We remark that in the proof of this assertion,
  to be given in the following paragraphs, we will only use the assumption that \(  U  \)
 is forward invariant. This allows us to  assume without loss of generality that \(  U\subseteq K  \).
  Indeed,  if  \(  U  \) is invariant with positive Lebesgue measure, then it must intersect \(  K  \) in a set of positive Lebesgue measure and, as \(  K  \) is forward invariant, also \( U \cap K  \) is forward invariant. Clearly, if \(  \leb(W^{s}(U\cap K))>\delta  \) then we also have \(  \leb(W^{s}(U))>\delta  \). In particular, as \(  U\subseteq K  \) it admits a partially hyperbolic structure and, as also \(  U\subseteq \widetilde H  \), it  is weakly non-uniformly expanding  along \(  E^{cu}\).

Now we show that there exists a \(  cu  \)-disk \(  D\subseteq V  \) such that \(  \leb_{D}(U)>0  \).
Recall that \(  V  \) is the neighbourhood of \(  K  \) introduced in the beginning of this section. To see this, consider a Lebesgue density point \(  p  \) of \(  U  \).
Notice that  \(  T_{p}M  \) has a partially hyperbolic splitting \(  E^{s}_{p}\oplus E^{cu}_{p}  \) and we can consider a neighbourhood of the origin foliated by disks parallel to the \(  E^{cu}  \) subspace whose images under the exponential map \(  \exp_{p}  \) are \(  cu  \)-disks in the manifold.
  Since \(  \exp_{p}  \) is a local diffeomorphism, the preimage of \(  U  \) under the exponential map has positive volume in \(  T_{p}M  \) and full density in the origin. By Fubini at least one of the disks above must intersect this set in positive relative volume, and the same must hold for its image under the exponential map.

Now let  \(  D\subseteq V  \)  be a \(  cu  \)-disk satisfying \(  \leb_{D}(U)>0  \), as in the previous paragraph.
Consider the sequences \(  \cdots \subseteq W_{2}\subseteq W_{1}\subseteq D  \) and \(  n_{1}<n_{2}<\cdots   \) given by Lemma \ref{posdens}.
By the third item of Lemma \ref{posdens} it follows that the relative measure of  \(  f^{n_{k}}(U\cap D)  \) in
\(  D_{k}  \) converges to~1. Since \(  U  \) is forward invariant we conclude that the relative measure of \(  U  \) in \(  D_{k}  \) converges to~1 and therefore \(  \leb_{D_{k}}(U)\to \delta_{1}/4  \) as \(  k \to \infty  \).
Since  \(  U \subseteq K  \),  all points of \(  U  \) have local stable manifolds of uniform size and the foliation defined by these local stable manifolds is absolutely continuous, it follows
  that \(  \widetilde H  \) is \(  \leb  \)-unshrinkable.
  \cqd

 The previous result, together with Proposition \ref{PropositionFatErgodicAttractors}, imply that there exist closed invariant sets \(  \Omega_{1},...,\Omega_{\ell}  \) such that for Lebesgue almost every \(  x\in H  \) we have
  \(  \omega(x)=\Omega_{j}  \) for some  \(  1\leq j\leq \ell \).  This gives  the first assertion of Proposition \ref{thmAtop}. We leave the proof of the remaining part of Proposition
\ref{thmAtop} to the next two lemmas.

\cle Each  \(  \Omega=\Omega_{j}  \) contains a \(  cu  \)-disk \(  \Delta  \) of radius \(  \delta_{1}/4  \) on which \(  f  \) is weakly non-uniformly expanding along \(  E^{cu}  \) for \(  \leb_{\Delta}  \) almost every point in \(  \Delta  \).
\fle 
\dem
 Let
  \[
A^{(n)}=\{x\in H: \dist(f^{k}(x), \Omega) \leq 1/n \text{ for every } k \geq 0\}.
\]
Since the set of points \(  x\in H  \) with \(  \omega(x)=\Omega  \) has positive Lebesgue measure, we clearly have \(  \leb(A^{(n)})>0  \) for every \(  n\geq 1  \).   Then, by the same arguments used in the proof of Lemma~\ref{le.unshrink}, with \(  A^{(n)}  \) playing the role of \(  U  \), there exists  a \(  cu  \)-disk
 \(  D^{(n)}\subseteq V  \) such that \(  \leb_{D^{(n)}}(A^{(n)})>0  \), and corresponding
 sequences \(  \cdots \subseteq W_{2}^{(n)}\subseteq W_{1}^{(n)}\subseteq D^{(n)}  \), \(  n_{1}<n_{2}<\cdots   \) (also depending on \(  n  \), but we omit the superscript here for obvious reasons...) and
 \(  cu  \)-disks \( D^{(n)}_{k}=f^{n_{k}}(W^{(n)}_{k})  \)
 such that
 \begin{equation}\label{istoaqui}
   \leb_{D^{(n)}_{k}}(A^{(n)})\to \delta_{1}/4, \quad   \text{ as } k\to\infty.
   \end{equation}
Let  \(  p^{(n)}_{k}  \) denote the center of each disk $D_{k}^{(n)}$.
 Up to taking a subsequence, we may assume that the sequence  \(  \{p^{(n)}_{k}\}  \) converges to a point  \(  p^{(n)}\in K  \), and
up to taking a further subsequence, and using Ascoli-Arzel\`a and the fact that the disks \(  D^{(n)}_{k}  \) have  tangent directions contained in the \(  cu  \)-cones, we may assume  that the sequence \(  \{D^{(n)}_{k}\}  \) converges uniformly, as \(  k\to\infty  \), to some  \(  cu  \)-disk~\(  \Delta^{(n)}  \) of radius \(  \delta_{1}/4  \).  Notice that each \(  \Delta^{(n)}  \) is  necessarily contained in a neighbourhood of \(  \Omega  \) of radius \(  1/n  \).

We claim the \(  f  \) is weakly non-uniformly expanding along \(  E^{cu}  \)
for  \(  \leb_{\Delta^{(n)}}  \)
almost every  point in \(  \Delta^{(n)}  \).
To see this, recall first of all that the property of weak non-uniform expansion is an asymptotic property and therefore if it is satisfied by a point \(  x  \), then it is satisfied by every point \(  y\in W^{s}(x)  \).
Moreover, every point of \(  \Delta^{(n)}  \) has a local stable manifold of uniform size, and   the foliation by  those local stable manifolds is absolutely continuous.   Since the sequence
 \(  \{D^{(n)}_{k}\}  \) converges uniformly to \(  \Delta^{(n)}  \), for large \(  k  \), the disks \(  D^{(n)}_{k}  \) will intersect the stable foliation through  points of \( \Delta^{(n)}  \), and therefore,
 by \eqref{istoaqui} and the fact that \(  A^{(n)}\subseteq H  \), it follows that  \(  f  \) is weakly non-uniformly expanding along \(  E^{cu}  \)
for  \(  \leb_{\Delta^{(n)}}  \)
 almost every point in \(  \Delta^{(n)}  \).

Now, arguing as above, we can consider a subsequence of  \(  \Delta^{(n)}  \)'s converging uniformly to some \(  cu  \)-disk
\(  \Delta  \) of radius \(  \delta_{1}/4  \) and \(  f  \) is weakly non-uniformly expanding along \(  E^{cu}  \)
for  \(  \leb_{\Delta}  \)
 almost every point in \(  \Delta  \). As each \(  \Delta^{(n)}  \) is   contained in a neighbourhood of \(  \Omega  \) of radius \(  1/n  \) and \(  \Omega  \) is closed, it follows that \(  \Delta\subseteq \Omega  \).
 \cqd

\cle
 \( f|_{\Omega}  \) is transitive.
  \fle
  \dem
  Recall that by construction there exists some point (in fact a positive Lebesgue measure set of points) in \(  H  \) whose \(  \omega  \)-limit set coincides with \(  \Omega  \). The orbit of any such point must eventually hit the stable manifold of some point in \(  \Delta\subseteq \Omega  \). As points in the same stable manifold have the same \(  \omega  \)-limit sets, we conclude that there exists a point of \(  \Omega  \) whose orbit is dense in \(  \Omega  \).
\cqd

\section{Construction on a reference leaf}\label{s.partion}

In this section we describe an algorithm for the construction of a partition of some subdisk of \(  \Delta  \) which is the basis of the construction of the GMY structure.
We first fix some arbitrary \(  1\leq j \leq \ell  \) and for the rest of the paper we let \(  \Omega=\Omega_{j}  \) and \(  \Delta=\Delta_{j}  \) as in Proposition~\ref{thmAtop}. We also fix a constant \(\delta_s>0\) so that local stable manifolds \( W^s_{\delta_s}(x)\) are defined for all points \(x\in K \).
For any subdisk  \(  \Delta'\subset \Delta  \)  we define
$$
 \cc(\Delta') =\bigcup_{x\in\Delta'
}W^s_{\delta_s}(x).
$$
Let $\pi$ denote the projection  from $\cc(\Delta')$ onto $\Delta'$
along local stable leaves.
We say that a centre-unstable disk $\gamma^u\subset M$ {\em $u$-crosses}
$\cc(\Delta')$ if $\pi( \gamma )=\Delta'$ for some connected component  \(  \gamma  \) of \(  \gamma^{u}\cap \cc(\Delta')  \).

\cre\label{pretend}
We will often be considering \(  cu  \)-disks which \(  u  \)-cross \(  \mathcal C(\Delta')  \).
By continuity of the stable foliation, choosing \(  \delta_{s}  \) sufficiently small, the diameter and  Lebesgue measure of such disks intersected with $\mathcal C(\Delta')$ are very close to those of \(  \Delta'\), respectively. To simplify the notation and the calculations below we will ignore this difference as it has no significant effect on the estimates.
\fre

\cle\label{l.delta2} Given $N\in\NN$, there exists $\delta_2=\delta_2(N,\delta_1)>0$ such that if $\gamma^u\subset \Omega$
is a $cu$-disk of radius~$\delta_1/2$ centred at $z$, then
$f^m(\gamma^u)$ contains a $cu$-disk of radius
$\delta_2$ centred at $f^m(z)$, for each $1\le m\le N$. \fle

\dem  We first prove the result for $j=1$. Let $z$ be the center of $\gamma^u$. Let  $f(y)$ be a point in
$\partial f(\gamma^u)$ minimizing the distance from $f(z)$ to
$\partial f(\gamma^u)$, and let $\eta_1$ be a curve of minimal
length in $f(\gamma^u)$ connecting $f(z)$ to $f(y)$. Letting
$\eta_0=f^{-1}(\eta_1)$ and $\dot\eta_1(x)$ be the tangent vector to the curve $\eta_1$
at the point $x$, we have
$$
\|Df^{-1}(w) \dot\eta_1(x)\| \le C \, \|\dot\eta_1(x)\|,
$$
where
 $$C=\max_{x\in M}\big\{\|Df^{-1}(x)\|\big\}\ge1.$$
  Hence,
$$
\length(\eta_0) \le C\length(\eta_1) .
$$
Noting that $\eta_0$ is a curve connecting $z$ to $y\in \partial
\gamma^u$, this implies that $\length(\eta_0)\ge \delta_1/2$, and so
$$
\length(\eta_1) \ge C^{-1}\length(\eta_0)\ge C^{-1}\delta_1/2.
$$
Thus  $f(\gamma^u)$ contains the  $cu$-disk $\gamma^u_1$ of radius
$C^{-1}\delta_1/2$ around $f(z)$.

 Making now $\gamma^u_1$ play the role of $\gamma^u$ and $f^2(z)$ play the
 role of $f(z)$, with the argument above we prove  that  $f(\gamma^u_1)$
 contains a $cu$-disk of radius $C^{-2}\delta_1/2^2$ centered at $f^2(z)$.
 Inductively, we  prove that
 $f^m(\gamma^u)$ contains a $cu$-disk of radius $C^{-m}\delta_1/2^m\ge C^{-N}\delta_1/2^{N}$
 around $f^m(z)$, for each $1\le m\le N$. We  take
 $\delta_2=C^{-N}\delta_1/2^{N}$.
 \cqd


\cle\label{l.N0q}
There are   $p\in \Delta$ and $N_0\ge 1$ such that for all $\delta_0>0$ sufficiently small
and each  hyperbolic pre-disk $V_n(x)\subseteq \Delta$ there is $0\le m\le N_0$ such that  $f^{n+m}(V_n(x))$ intersects $W^s_{\delta_s/2}(p)$ and $u$-crosses $\mathcal C(B^u_{\delta_0}(p))$, where $B^u_{\delta_{0}}(p)$  is the
ball in $\Delta$ of radius $\delta_0$  centred at \( p \).
\fle
\dem First of all we observe that, as the sub-bundles in the dominated spliting have angles uniformly bounded away from zero,  given any $\rho>0$ there is $\alpha=\alpha(\rho) >0$,
with $\alpha\to0$ as $\rho\to0$,
for which the following holds:
if $x,y\in  \Omega$  satisfy
$\dist(x,y)<\rho$ and $\dist_{\gamma^u}(y,\partial \gamma^u)>\delta_1$ for some $cu$-disk $\gamma^u\subset\Omega$, then $W^s_{\delta_s}(x)$ intersects
$\gamma^u$ in a point $z$ with
$$\dist_{W^s_{\delta_s}(x)}(z,x)<\alpha  \qand\dist_{\gamma^u}(z,y)<\delta_1/2.$$
Take $\rho>0$ small enough so that $4\alpha<\delta_s $. Since $f|_\Omega$  is transitive, we may choose
$q\in  \Omega$  and
$N_0\in\NN$ such that both:
\begin{enumerate}
\item $W^s_{\delta_s/4}(q)$ intersects
$\Delta$ in a point $p$ with $\dist_\Delta(p ,\partial \Delta)>0$; and
\item $\{ f^{-N_0}(q),\dots, f^{-1}(q),q\}$ is $\rho$-dense in
$ \Omega$. 
\end{enumerate}
Given a hyperbolic pre-disk $V_n(x)\subseteq \Delta$ we have by definition that  $f^{n}(V_n(x))$ is a $cu$-disk of radius $\delta_1$ centred at $y=f^n(x)$ inside $\Omega$.
Consider $0\le m\le
N_0$ such that $\dist(f^{-m}(q),y)<\rho$.  Then, by the choice of
$\rho$ and $\alpha $, we have that  $W^s_{\delta_s}(f^{-j}(q))$
intersects $f^{n}(V_n(x))$  in a point $z$ with
$\dist_{W^s_{\delta_s}(f^{-j}(q))}(z,f^{-j}(q))<\alpha  <\delta_s/4$
and $\dist_{f^{n}(V_n(x))}(z,y)<\delta_1/2$. In particular,
$f^{n}(V_n(x))$ contains a $cu$-disk $\gamma^u$ of radius $\delta_1/2$ centred at $z$. It follows from Lemma~\ref{l.delta2} that $f^m(\gamma^u)$  contains a $cu$-disk of radius
 $\delta_2=\delta_2(N_0, \delta_1)>0$ centered at $f^m(z)\in W^s(p)$. Moreover, as distances are not expanded under iterations of points in the same stable manifold, we have
  $$\dist_{W^s(p)}(f^m(z),p)\le \dist_{W^s(p)}(f^m(z),q)+\dist_{W^s(p)}(q,p)\le \frac{\delta_s}{4}+\frac{\delta_s}{4},
  $$
which means that  $f^{n+m}(V_n(x))$ intersects $W^s_{\delta_s/2}(p)$. Also, choosing $\delta_0>0$ sufficiently small (depending only on $\delta_2$) we have $u$-crosses $\mathcal C(B^u_{\delta_0}(p))$.
\cqd

We now fix \(  p\in \Delta, N_{0}\geq 1  \) and \( \delta_{0}>0  \) sufficiently small so that the conclusions of Lemma~\ref{l.N0q} hold.
Considering
the constant
\begin{equation}\label{eq:K0}
K_0 = \max_{x\in M}\left\{\|Df^{-1}(x)\|, \|Df(x)\|\right\},
\end{equation}
we choose in particular $\delta_0>0$  small so that
  \begin{equation}\label{eq.delta_0}
2\delta_0K_0^{N_0}\sigma^{-N_0} < \delta_1 K_0^{-N_0}.
\end{equation}
Now we define
\begin{equation}\label{Delta0}
\Delta_0=B^u_{\delta_{0}}(p)\qand  \mathcal C_0=\mathcal C(\Delta_{0}).
 \end{equation}
 We also choose $\delta_0>0$ small so that any $cu$-disk intersecting $W^s_{3\delta_s/4}$ cannot reach the top or bottom parts of $\mathcal C_0$, i.e. the boundary points of the  local stable manifolds $W^s_{\delta_s}(x)$ through points $x\in \Delta_0$.
For every  $n\ge 1$ we define
 $$
 H_n=\{x\in \Delta \cap H \colon \text{ $n$ is a hyperbolic time for
 $x$ }\}.
 $$
It follows from Lemma~\ref{l.contraction2} that for each   $x \in H_{n}\cap \Delta_0$ there exists a hyperbolic pre-disk $V_{n}(x)\subset \Delta$.
Then,  by Lemma \ref{l.N0q}  there are $0\le m\le N_0$
and a centre-unstable disk \(  \omega_{n}^{x}\subseteq \Delta  \) such that
\begin{equation}\label{D.candidate2}
  \pi(f^{n+m}(\omega_{n}^{x})) =\Delta_{0}.
\end{equation}
We remark that condition \eqref{D.candidate2} may in principle hold for several values of \(  m  \). For definiteness, we shall always assume that \(  m  \) takes the smallest possible value.
Notice that \(  \omega^{x}_{n}  \) is associated to \(  x  \) by construction, but does not necessarily contain \(  x  \).

%

In the sequel we describe  an inductive  partitioning algorithm which gives rise to a (\(  \leb   \) mod~0) partition \( \mathcal P \) of the \(  cu  \)-disk \(  \Delta_{0}  \).

\medskip

\paragraph{\bf First step of induction}
Notice  that since \(  \|Df\|  \) is uniformly bounded, for any \(  n\geq 1  \), all hyperbolic pre-disks \(  V_{n}(x)  \) contain a ball of some radius \(  \tau_{n}>0  \)  depending only on~\(  n  \). In particular, by compactness, the set \(  H_{n}\cap \Delta_0  \) is covered by a finite number of hyperbolic pre-disks \(  V_{n}(x)  \).
We fix  some large \(n_0\in\NN \)  and ignore any dynamics occurring
up to time \( n_{0} \).
Then  there exist \(  \ell_{n_{0}}  \) and points  $z_1,\dots,
z_{\ell_{n_0}}\in H_{n_0}$ such that
 \begin{equation*}
\label{eq:set} 
 H_{n_0}\cap \Delta_0\subset V_{n_0}(z_1)\cup\cdots \cup V_{n_0}(z_{\ell_{n_0}}).
 \end{equation*}
We now choose a maximal subset of points
$x_1,\dots,x_{j_{n_0}}\in\{z_1,\dots, z_{\ell_{n_0}}\}$
such that the corresponding sets \(  \omega^{x_{i}}_{n_{0}}  \) of type \eqref{D.candidate2} are pairwise disjoint and contained in \(  \Delta_{0}  \), and let
$$
\mathcal P_{n_0}=\{\omega_{n_0}^{x_1},\ldots,
\omega_{n_0}^{x_{j_{n_0}}}\}.
$$
These are the elements of the partition $\cp$ constructed in the
$n_0$-step  of the algorithm.
Let 
\begin{equation*}
\Delta_{n_0} = \Delta\setminus \bigcup_{\omega \in
\mathcal P_{n_0}}\omega.
\end{equation*}
 For each
\(0\leq i \leq j_{n_0}\), we define the inducing time
 \[
R|_{\omega^{x_i}_{n_0}}=n_0+m_i
\]
where \(  0\leq m_{i} \leq N  \) is the integer associated to \(  \omega^{x_{i}}_{n_{0}}  \) as in \eqref{D.candidate2}.
Let now $ Z_{n_0}$ be the set of points in $\{z_1,\dots,
z_{\ell_{n_0}}\}$ which were not chosen in the construction of
$\mathcal{P}_{n_0}$, i.e.
$$ Z_{n_0}=\{z_1,\dots, z_{\ell_{n_0}}\}\setminus\{x_1,\dots, x_{j_{n_0}}\}.$$
We remark that for every \(  z\in  Z_{n_{0}}  \), the set  \(  \omega_{n_{0}}^{z}  \) associated to \(  z  \) must either intersect some \(  \omega^{x_{i}}_{n_{0}}\in \mathcal P_{n_{0}}  \) or intersect the complement of \(  \Delta_{0}  \) in \(  \Delta  \), since otherwise it would have been included in the set \(  \mathcal P_{n_{0}}  \).
We now introduce some notation to keep track  of which one of the above reasons  is responsible for the fact that  \(  z \) belongs to \( Z_{n_0} \).
We let  $\Delta_{0}^c=\Delta\setminus\Delta_{0}$ and for each  $\omega\in \mathcal P_{n_0}\cup\{\Delta_{0}^{c}\}$ we define
\begin{equation*}\label{auxiliosatelite0}
 Z^{\omega}_{n_0}=\left\{x \in   Z_{n_0}: \omega^x_{n_0}\cap \omega \neq\emptyset\right\}
\end{equation*}
and  the  associated \emph{$n_0$-satellite} set
\begin{equation*}
    S_{n_0}^{\omega}=  \bigcup_{x\in
    Z_{n_0}^{\omega}}V_{n_0}(x).
\end{equation*}
Finally let $$V_{n_0}=  \bigcup_{i=1}^{j_{n_0}}V(x_i)$$ and
\begin{equation*}\label{satelite_n0}
{S}_{n_0} = \bigcup_{\omega\in \mathcal P_{n_0}\cup\{\Delta_{0}^{c}\}}{S}_{n_0}^\omega \,\cup\, V_{n_0}.
\end{equation*}
Notice that \( S_{n_0} \) 
$$ H_{n_0}\cap \Delta_0
\subset S_{n_0}\cup\bigcup_{\omega\in\mathcal P_{n_0}}\omega.
$$

\smallskip

\paragraph{\bf General step of induction}
We now proceed inductively and assume that the construction has been carried out up to time \(  n-1  \) for some \(  n> n_{0}  \).
 More precisely,
for each \(  n_{0}\leq k\leq n-1  \) we have a collection of pairwise disjoint sets \(  \mathcal P_{k}=\{ \omega^{x_{1}}_{k},..., \omega^{x_{j_{k}}}_{k}\}  \) which ``return'' at time \(  k+m  \) with \(  0\leq m \leq N  \), and such that for any \(  k\neq k'  \), any two sets \(  \omega\in \mathcal P_{k}\) and \( \omega'\in \mathcal P_{k'}  \)  we have \(  \omega\cap\omega'=\emptyset  \). We also have a set \(  \Delta_{k}  \) which is the set of points which do not yet have an associated return time.
To construct all relevant objects at time \(  n  \), we note first all,
as before,  that there are  
$z_1,\dots, z_{\ell_{n}}\in H_{n}\cap \Delta_{n-1}$
such that  
\[
H_{n}\cap \Delta_{n-1}\subset V_{n}(z_1)\cup\cdots \cup
V_{n}(z_{\ell_{n}}),
\]
and we choose a maximal subset of points
\(  x_1,\dots,x_{j_{n}}\in\{z_1,\dots, z_{\ell_{n}}\}   \)
such that the corresponding sets of type \eqref{D.candidate2}
are pairwise disjoint and contained in \(  \Delta_{n-1}  \). Then we  let
$$
\mathcal P_{n}=\{\omega_{n}^{x_1},\ldots,
\omega_{n}^{x_{j_n}}\}
$$
 These are the elements of the partition $\cp$
constructed in the $n$-step  of the algorithm.
We also define the set of points of \(  \Delta_{0}  \) which do not belong to partition elements constructed up to this point:
\begin{equation*}\label{delta_n}
\Delta_{n} = \Delta_{0}\setminus \bigcup_{\omega\in\mathcal P_{n_0}\cup\cdots\cup \mathcal P_n }\omega.
\end{equation*}
For each   $0\leq i \leq j_n$
we set
\[
R|_{\omega^{x_i}_{n}}=n+m_{i},
\]
where \(  0\leq m_{i} \leq N  \) is the integer associated to \(  \omega^{x_{i}}_{n_{0}}  \) as in \eqref{D.candidate2}.
Let
$$ Z_{n}=\{z_1,\dots, z_{\ell_{n}}\}\setminus\{x_1,\dots, x_{j_{n}}\}
$$
and for any  $\omega\in \mathcal P_{n_0}\cup\cdots\cup \mathcal P_n\cup \{\Delta_{0}^{c}\}$ define
\begin{equation*}\label{auxiliosatelite}
    Z_{n}^{\omega}=\left\{z \in  Z_{n}: \omega^z_{n}\cap \omega\neq\emptyset\right\}
\end{equation*}
and its \emph{$n$-satellite}
\begin{equation*}
    S_{n}^{\omega}=  \bigcup_{z\in
    Z_{n}^{\omega}}V_{n}(z).
\end{equation*}
Finally let  $$V_{n}=  \bigcup_{i=1}^{j_{n}}V(x_i)$$ and
\begin{equation*}\label{satelite_n}
{S}_{n} = \bigcup_{\omega\in \mathcal P_{n_0}\cup\cdots\cup \mathcal P_n\cup \{\Delta_{0}^{c}\}}{S}_{n}^{\omega}\,\cup\, V_n.
\end{equation*}
Note that  for each $n\ge n_0$ one has
\begin{equation}\label{re.saturacao}
H_n\cap \Delta_{n-1}\subset S_n\cup\bigcup_{\omega\in\mathcal P_{n_0}\cup\cdots\cup \mathcal P_n}\omega.
\end{equation}
More specifically we have that \( H_n\cap \Delta_{n-1}\subset S_n \), i.e. all points in \( \Delta_{n-1} \) which have a hyperbolic time at time \( n \) are "covered" by \( S_n \) while the points which have a hyperbolic time at time \( n \) but which are already contained in previously constructed partition elements, are trivially `covered" by the union of these partition elements. The inclusion \eqref{re.saturacao} will be crucial in Section \ref{sec:int} to prove the integrability of the return times. 

%

This inductive construction allows us to define the family
\[
\mathcal P = \bigcup_{n\geq n_{0}}\mathcal P_{n}
\]
of pairwise disjoint subsets of \(  \Delta_{0}  \). At this point there is no guarantee that \(  \mathcal P  \) forms a \(  \leb  \) mod~0 partition of \(  \Delta_{0}  \). This will follow as a corollary of Proposition \ref{d.prop.Sn} below.

\section {Partition on the reference leaf}\label{s.measureHS}

In this section we prove that the elements of \(  \mathcal P  \) defined in the previous section form a \(  \leb_{\Delta}   \) mod 0 partition of the disk \(  \Delta_{0}  \) introduced in \eqref{Delta0}. Some of the partial technical estimates will also be used later to prove the integrability of the return times with respect to \( \leb_{\Delta}  \).

\cle\label{D.estpreball}
There exists $C_{2}>0$ such that
for any \(  n\geq k \geq n_{0}  \) and any \(  \omega\in\mathcal P_{k}  \) we have
\[
\leb_{\Delta} ( S_{n}^{\omega})
  \leq C_2 \leb_{\Delta} \left(\bigcup_{z\in
    Z_{n}^\omega} \omega_{n}^z\right).
\]
  \fle

 \dem

 We note first of all  that, from the construction above, two distinct points \(  z_{1}, z_{2}   \) with the same hyperbolic time \(  n  \) can give rise to the same associated disks \(  \omega^{z_{1}}_{n}=\omega^{z_{2}}_{n}  \). We prove here  that the measure of the union of the hyperbolic pre-disks \(  V_{n}(z)  \) associated to points \(  z\in Z_{n}^{\omega}  \) which give rise to the same disk \(  \omega^{z}_{n}  \) is comparable to the measure of \(  \omega^{z}_{n}  \).
More precisely, we will show that for every
$n\ge 1$  and  $z_1,\dots ,z_N\in H_n$ with
$\omega_{n}^{z_i}=\omega_{n}^{z_1}$ for $1\le i\le N$ we have
\begin{equation}\label{ultimah}
    \leb_\Delta\left(\bigcup_{i=1}^{N} V_n(z_i)\right)\leq C_{2}\leb_{\Delta}(
    \omega^{z_1}_{n}).
   \end{equation}
Notice that \eqref{ultimah} implies the statement in the lemma. Indeed, consider a subdivision of the set  \(  S_{n}^{\omega}  \) of all hyperbolic pre-disks associated to the points in \(  Z_{n}^{\omega}  \) into a finite number of classes such that all hyperbolic pre-disks in each class have the same associated set \(  \omega_{n}^{z}  \). Then apply\eqref{ultimah} to each one. This gives the statement in the lemma.

Thus we just need to prove \eqref{ultimah}.
 For simplicity of notation, for  $1 \leq i\leq
 N$, we write  \(  U_i=V_n(z_i)\) and
\(  B_i=f^{n}(V_{i}) .
\)
We define
$$
X_1= U_1 \qand X_{i}= U_i\setminus\bigcup_{j=1}^{i-1} U_j, \quad \text{for $2\le i\le N$}.
$$
Similarly
$$
Y_1= B_1 \qand Y_{i}= B_i \setminus\bigcup_{j=1}^{i-1} B_j, \quad \text{for $2\le i\le N$}.
$$
 Observe that the \(  X_{i}  \)'s are pairwise disjoint sets whose union coincides with the union of the \(  U_{i}  \)'s, and similarly for the \(  Y_{i}  \)'s and \(  B_{i}  \)'s.
Recalling that $\omega_{n}^{z_i}=\omega_{n}^{z_1}$ for $1\le i\le N$, by the third item of Lemma~\ref{l.contraction2} we have
\begin{equation*}
    \frac{\leb_\Delta (X_i)}{\leb_\Delta (\omega^{z_1}_{n}) }\leq C_1 \frac{\leb_{f^n(\Delta)} (Y_i)}{\leb_{f^n(\Delta)}  f^n(
    \omega^{z_1}_{n})}. 
\end{equation*}
Hence
\begin{eqnarray*}
\frac{\leb_\Delta(U_1 \cup \ldots \cup U_N)}{\leb_\Delta (\omega^{z_1}_{n})}
  &=&
  \frac{\sum_{i=1}^{N}\leb_\Delta (X_i)
   }{\leb_\Delta ( \omega^{z_1}_{n})}\\
  &\leq&
  C_1\frac{\sum_{i=1}^{N}\leb_{f^n(\Delta)} (Y_i)
   }{\leb_{f^n(\Delta)} (f^n( \omega^{z_1}_{n}))} \\
&= &C_1\frac{\leb_{f^n(\Delta)} (B_1\cup \ldots \cup B_N)}{\leb_{f^n(\Delta)} (f^n(
    \omega^{z_1}_{n}))}.
\end{eqnarray*}
We just need to show that the right hand side is bounded above, and for this it is sufficient to show that the denominator
\(  \leb_{f^n(\Delta)}  (f^n(\omega^{z_1}_{n}))  \) on the right hand side is bounded below.  This is clearly true, because by definition of
\(  \omega^{z_1}_{n}  \) we have  \(  m\leq N_{0}  \) such that \(  f^{n+m}(\omega^{z_1}_{n})  \) is a \(  cu  \)-disk of radius \(  \delta_{0}  \).
\cqd

\cre\label{remarco}
The argument used to prove \eqref{ultimah} gives in particular that for each $1\le i\le j_n$ we have
 $\leb_\Delta(V(x_i))\le C_2\leb_\Delta(\omega_n^{x_i}).$
\fre

The next lemma shows that, for each $n$ and
$m$ fixed, the Lebesgue measure on the disk $\Delta$ of the union of sets
$\omega^{z}_{n}$ which intersects an element of partition is
proportional to the Lebesgue measure of that element. The proportion
constant can actually be made uniformly summable in $n$.

 \cle\label{estimativas}There
exists  $C_3> 0$ such that for all $n\geq k\ge n_0$ and $\omega\in
\mathcal{P}_k$ we have
$$
\leb_{\Delta} \left(\bigcup_{z \in
Z_n^\omega}\omega_{n}^z\right)\leq C_3\sigma^{n-k} \leb_{\Delta}(\omega).
$$
\fle

\dem
By construction, given $\omega\in\mathcal P_k$,  there is some hyperbolic pre-disk \(  V_{k}(y)  \) such that
\[
  \omega\subset V_{k}(y) \subset V_{k}^{+}(y)
  \]
and whose images under \(  f^{k}  \) are respectively \(  cu  \)-disks \(  B_{\delta_{1}}^{u}\subset B_{2\delta_{1}}^{u} \) centred  at \(  f^{k}(y)  \). Moreover, there exists some integer \(  0 \leq \ell \leq N_{0}  \) such that \(  f^{k+\ell}(V_{k}(y))  \) \(  u  \)-crosses \(  \mathcal C_0  \) and \(  f^{k+\ell}(\omega)  \) is that part of \(  f^{k+\ell}(V_{k}(y))  \) which projects onto \(  \Delta_{0}  \). Moreover, 
 \(    f^{k} (V_{k}^{+}(y) )\) is a \(  \delta_{1}  \)-neighbourhood of \(    f^{k} (V_{k}(y) )\)
 and so  \(    f^{k+\ell} (V_{k}^{+}(y) )\) contains a  \(  \delta_{1}K_{0}^{-N_{0}}  \)-neighbourhood of \(    f^{k+\ell} (V_{k}(y) )\), where is defined in \eqref{eq:K0}. In particular,
 \begin{equation}\label{eq:conbound}
 \text{\(   f^{k+\ell} (V_{k}^{+}(y) )\) contains a  \(  \delta_{1}K_{0}^{-N_{0}}  \)-neighbourhood of \(    \partial f^{k+\ell} (\omega)\).}
 \end{equation}
 For any \(  n\geq k  \) we let
 $$A^0_{n,k}=\left\{z \in f^{k+\ell}(V_{k}^{+}(y)) : \dist_{f^{k+\ell}(V_{k}^{+}(y)) }(z,\partial f^{k+\ell}(\omega))\leq
    2\delta_0K_0^{N_{0}}\sigma^{n-(k+N_{0})}
    \right\}
    $$
    and
     $$A^1_{n,k}=\left\{z \in f^{k+\ell}(\omega) : \dist_{f^{k+\ell}(V_{k}^{+}(y)) }(z,\partial f^{k+\ell}(\omega))\leq
    2\delta_1K_0^{N_{0}}\sigma^{n-(k+N_{0})}
    \right\}.
    $$
 Observe that \(  A^0_{n,k}  \) and  \(  A^1_{n,k}  \) are both annuli 
  surrounding the boundary   of \(  \omega \) in $f^{k+\ell}(V_{k}^{+}(y)) $, with the particularity that \(  A^1_{n,k}  \)  surrounds only inside $f^{k+\ell}(\omega)$.
  A straightforward calculation gives that
  there is a constant \(  C>0  \), independent of \(  k  \) and \(  n  \), such that
 \begin{equation}\label{eq:Bnk}
\leb_{f^{k+\ell}(V_{k}^{+}(y))}(A_{n,k}^i)\leq C
\sigma^{n-k},\quad i=1,2.
 \end{equation}
Now we see that for $z\in Z_n^\omega$ we have $f^{k+\ell}(\omega_{n}^z)$ contained in $ A^0_{n,k}$ or $ A^1_{n,k}$, depending on the  following two possible cases:
\begin{enumerate}
\item $\omega_n^z\subseteq \omega$.\\
By the first item of Lemma \ref{l.contraction2}  (see also Remark \ref{pretend}),  for each
$\omega_{n}^z$ with \(  z\in Z_{n}^{\omega}  \)  we have
\begin{equation}\label{eq:previous0}
\diam_{f^{k+\ell}(\omega_{n}^z)}( f^{k+\ell}(\omega_{n}^z))\leq 
\diam_{f^{k+\ell}(\omega_{n}^z)}( f^{k+\ell}(V_{n}(z)))
\le 2\delta_1K_0^{N_{0}}\sigma^{n-(k+N_{0})},
\end{equation}
Noting that as $z\notin \omega$ and $\omega_n^z\subseteq \omega$, then $ V_{n}(z)$ necessarily intersects the boundary of $\omega$, and so $f^{k+\ell}(V_{n}(z))$ intersects $\partial f^{k+\ell}(\omega)$. It follows from \eqref{eq:previous0} that
\begin{equation}\label{eq:Bnk12}
f^{k+\ell}(\omega_{n}^z)\subseteq A^1_{n,k}.
 \end{equation}
\item $\omega_n^z\nsubseteq \omega$.
\\
In this case,  \( \omega_{n}^{z}   \) necessarily intersects the boundary of~\(  \omega  \) because \(  z\in Z_{n}^{\omega}  \).  Once more by the first item of Lemma \ref{l.contraction2}  (see also Remark \ref{pretend}),  
we have
\begin{equation}\label{eq:previous}
\diam_{f^{k+\ell}(\omega_{n}^z)}( f^{k+\ell}(\omega_{n}^z))\leq 2\delta_0K_0^{N_{0}}\sigma^{n-(k+N_{0})},
\end{equation}
where we have used the fact that \(  \omega_{n}^{z}  \) is contained  in some hyperbolic pre-disk \(  V_{n}(z)  \) and the term \(  K_{0}^{N_{0}}  \) comes from the fact that \(  \omega_{n}^{z}  \) may require up to a maximum of \(  N_{0}  \) iterates to go from \(  f^{n}(V_{n}(z))  \) to the cylinder \(  \mathcal C_0\), \(  u  \)-crossing it.
Since  
 \( \omega_{n}^{z}   \) intersects the boundary of \(  \omega  \), then  \( f^{k+\ell }(\omega_{n}^{z} )  \) intersects the boundary of \(  f^{k+\ell }( \omega ) \). Recalling that $\sigma<1$ and \eqref{eq.delta_0},  it follows from \eqref{eq:conbound} and \eqref{eq:previous} that
 \begin{equation}\label{eq:Bnk1}
f^{k+\ell}(\omega_{n}^z)\subseteq A^0_{n,k}.
 \end{equation}
\end{enumerate}
%
%
Therefore
\begin{eqnarray*}
\frac{\leb_{V_{k}^{+}(y) }\left(\bigcup_{z \in Z_n^\omega}\omega_{n}^z\right)}{\leb_{V_{k}^{+}(y) } (\omega)}
&\leq&
\widetilde C\,
\frac{\leb_{f^{k+\ell}(V_{k}^{+}(y) )}\left(f^{k+\ell}\left(\bigcup_{z \in Z_n^\omega}\omega_{n}^z\right)\right)}{\leb_{f^{k+\ell}(V_{k}^{+}(y) )} (f^{k+\ell}(\omega))} \\
&\leq&
\widetilde C\,
\frac{\leb_{f^{k+\ell}(V_{k}^{+}(y) )}\left(A^0_{n,k}\right)+\leb_{f^{k+\ell}(V_{k}^{+}(y) )}\left(A^1_{n,k}\right)}{\leb_{f^{k+\ell}(V_{k}^{+}(y) )} (f^{k+\ell}(\omega))},
\end{eqnarray*}
where $\widetilde C>0$ is a uniform constant that incorporates the distortion at the hyperbolic time $k$ given by Lemma~\ref{l.contraction2} and the  distortion of  $f^\ell$   with $\ell\le N_0$.
Recalling that $ f^{k+\ell}(\omega))$ $u$-crosses~$\mathcal C_0$, the result then follows by \eqref{eq:Bnk}, \eqref{eq:Bnk12} and \eqref{eq:Bnk1}.
\cqd

 \cpr \label{d.prop.Sn}
$\displaystyle{\sum_{n=n_0}^{\infty}\leb_{\Delta}({S}_{n}) <\infty}$.
\fpr

\dem
 Observe that
\begin{equation}\label{eq.sumsum}
\sum_{n=n_0}^{\infty}\leb_\Delta({S}_{n}) \le
\sum_{n=n_0}^{\infty}\leb_\Delta\left({S}_{n}^{\Delta_{0}^c }\right)+
\sum_{k=n_0}^\infty\sum_{\omega\in
\mathcal{P}_k}\sum_{n=k}^{\infty}\leb_\Delta({S}_{n}^{\omega})
+\sum_{n=n_0}^\infty \leb_\Delta(V_n).
\end{equation}
We start by estimating  the sum with respect to the satellites of $\Delta_{0}^c$.
Notice that from  Lemma~\ref{l.contraction2} it follows that all hyperbolic pre-disks
\(  V_{n}(x)  \) have diameter \(  \leq 2\delta_{1}\sigma^{n}  \).
Therefore
$$
{S}_{n}^{\Delta_{0}^c}\subset \{x \in \Delta_{0}:\,
\dist(x,\partial\Delta_{0})<2\delta_1\sigma^{n}\},
$$
and so  we can find $\zeta>0$ such that
$$
\leb_\Delta \left({S}_{n}^{\Delta_{0}^c }\right)\leq \zeta\sigma^{n}.
$$
This obviously implies that the part of the sum related to $\Delta_{0}^c$ in~\eqref{eq.sumsum} is finite.

Consider now $n\geq k\ge n_0$. By Lemmas \ref{D.estpreball}  and
\ref{estimativas},
for any   $\omega \in \mathcal{P}_k$ we have
\[
  \leb_\Delta(S_{n}^{\omega})
  \leq C_2C_3 \sigma^{n-k}\leb_\Delta(\omega).
\]
It follows that
\begin{eqnarray*}
 \sum_{k=n_0}^\infty\sum_{\omega\in
\mathcal{P}_k}\sum_{n=k}^\infty\leb_{\Delta}(S_n^{\omega})
&\le&C_{2}C_{3}\sum_{k=n_0}^\infty\sum_{\omega\in
\mathcal{P}_k}\sum_{j=0}^{\infty}\sigma^{j}\leb_{\Delta}(\omega)\\
&=& C_{2}C_{3}\frac{1}{1-\sigma}\sum_{k=n_0}^\infty\sum_{\omega\in
\mathcal{P}_k} \leb_{\Delta}(\omega)\\
&\leq & C_{2}C_{3}\frac{1}{1-\sigma}\leb_{\Delta}(\Delta).
\end{eqnarray*}

Finally, by Remark~\ref{remarco} we have
 $$\sum_{n=n_0}^\infty\leb_\Delta(V_n)\le C_2\sum_{n=n_0}^\infty \sum_{\omega\in \mathcal P_n}\leb_\Delta(\omega)\le C_2\leb_\Delta(\Delta)$$
and this gives the conclusion.
\cqd

We are now ready to show that our inductive construction gives rise to a $\leb_\Delta$ mod 0
partition  of $\Delta_0$. Recall that  $\Delta_0\supset\Delta_{n_0}\supset \Delta_{n_0+1}\supset...$,
where \(  \Delta_{n}  \) is the set of points which does not belong to any element of the collection \(  \mathcal P  \) constructed up to time \(  n  \).
It is enough to show that
\begin{equation}\label{eq:cap}
\leb_\Delta\left(\bigcap_{n}\Delta_{n}\right)=0.
\end{equation}
To prove this, notice that by Proposition \ref{d.prop.Sn}, the sum of the
$\leb_\Delta$ measures of the sets ${S}_{n}$ is finite.  It follows from
Borel-Cantelli Lemma that $\leb_\Delta$ almost every
$x\in \Delta_0$ belongs only to finitely many ${S}_{n}$'s, and therefore one can find $n$ such that
 $x\notin {S}_j$ for $j\ge n$.
 Since  $\leb_\Delta$ almost every  $x\in\Delta_0$ has infinitely many hyperbolic times, it follows from \eqref{re.saturacao}
that  $x\in
\omega$ for some $\omega\in\mathcal P_{n_0}\cup\cdots\cup \mathcal P_n$ and therefore
 \eqref{eq:cap} holds.


\section{The GMY structure}\label{GMY}


We are now ready to define the GMY structure on \(\Omega\) as in the beginning of Section \ref{s.partion}.
Consider the center-unstable disk $\Delta_0\subset \Delta$ as in \eqref{Delta0} and the $\leb_\Delta$ mod 0
partition $\cp$ of $\Delta_0$ defined in
Section~\ref{s.partion}. We define
$$\Gamma^s=\left\{ W^s_{\delta_s}(x):\,x\in \Delta_0\right\}.$$
Moreover, we define
 $\Gamma^u$ as the set of all  local unstable manifolds contained in $\mathcal C_0$ which $u$-cross~$\mathcal C_0$. Clearly,
$\Gamma^u$ is nonempty because $\Delta_0\in \Gamma^u$. We need to see
 that the union of the leaves in $\Gamma^u$ is compact. This follows ideas that we have already used to prove Proposition~\ref{thmAtop}. By the domination
property and Ascoli-Arzelà Theorem, any limit leaf $\gamma_\infty$
of leaves in $\Gamma^u$ is still a $cu$-disk $u$-crossing $\mathcal C_0$. Thus, by definition of $\Gamma^u$, we have
$\gamma_\infty\in\Gamma^u$. We thus define our set \( \Lambda\) with hyperbolic product structure as the intersection of these families of stable and unstable leaves.
The cylinders $\{\cc(\omega)\}_{\omega\in\cp}$ then clearly form a countable collection of
\(s\)-subsets of \( \Lambda\) that play the role of  the sets    $\Lambda_1,\Lambda_2,\dots$ in (P$_1$)
with the corresponding return times \( R(\omega)\).
 It just remains to check that conditions (P$_1$)-(P$_5$) hold.

\subsection{Markov and contraction on stable leaves}
Condition (P$_1$) is essentially an immediate consequence of the construction.  We just need to check that $f^{R(\omega)}(\mathcal C(\omega))$ is a $u$-subset, for any $\omega\in\cp$. Indeed, choosing the integer $n_0$ in the first step of the inductive algorithm sufficiently large, and using the fact that the local stable manifolds are uniformly contracted by forward iterations under  $f$, we can easily see that the ``height'' of $f^{R(\omega)}(C(\omega))$ is at most $\delta_s/4$. Hence, by the choice of $\delta_0$ we have $f^{R(\omega)}(\mathcal C(\omega))$ made by $cu$-unstable disks contained in $\mathcal C_0$. Moreover, as $f^{R(\omega)}(\omega)$ $u$-crosses~$\mathcal C_0$ the same occurs with the local unstable leaves that form $\mathcal C(\omega)$, and so  (P$_1$) holds.
 (P$_2$) is clearly verified under our assumptions.

\subsection{Backward contraction and bounded distortion}
The backward contraction on unstable leaves and bounded distortion, respectively properties (P$_3$)  and  (P$_4$), follow from Lemma~\ref{l.contraction2}. Indeed, by construction, for each $\omega\in\cp$  there is a
hyperbolic pre-ball $V_{n(\omega)}(x)$ containing $\omega$
associated to some point $x\in D$ with $\sigma$-hyperbolic time
$n(\omega)$ satisfying $R(\omega)-N_0\le n(\omega)\le R(\omega)$. It is sufficient to prove the
(P$_3$)  and  (P$_4$) at the time \( n=n(\omega) \) instead of \( R(\omega) \) since the two differ by a finite and uniformly bounded number of iterations whose  contribution to the estimates is also uniformly controlled.

An immediate consequence of \eqref{e.delta1} is that
if   \(y\in K\) satisfies   \( \dist(f^j(x),f^j(y))\leq \delta_1\) for \( 0\leq j \leq n-1\), then
 \( n \) is a \(\sigma^{3/4}\)-hyperbolic time for \( y \), i.e.
$$
\prod_{j=n-k+1}^{n}\left\|Df^{-1}|E^{cu}_{f^j(y)}\right\|\leq\sigma^{3k/4},
\qquad\text{for all $1\le k \le n$.}
$$
Therefore, taking $\delta_s,\delta_0<\delta_1/2$, for any $\gamma\in \Gamma^u$ we have that $n$ is a $\sigma^{3/4}$-hyperbolic
time for every point in $\cc_\omega\cap\gamma$. The backward contraction on unstable leaves and bounded distortion are then consequence of  Lemma~\ref{l.contraction2}, recall Remark~\ref{re.bounded}.

\subsection{Regularity of the foliations}
\label{sec.regularity}
Property (P$_5$)  is standard for uniformly hyperbolic attractors. In the rest of this section we shall adapt classical ideas to our setting.

We begin with the statement of a useful lemma on vector bundles whose proof can be found in \cite[Theorem 6.1]{HP}. Let us recall that a metric $d$ on $E$ is \emph{admissible} if there is a complementary bundle $E'$ over $X$, and an isomorphism $h\colon E\oplus E'\to X\times B$ to a product bundle, where $B$ is a Banach space, such that $d$ is induced from the product metric on $X\times B$.

\cle\label{th.hirsch-pugh}
Let $p\colon E\to X$ be a vector bundle over a metric space $X$ endowed with an admissible metric. Let $D\subset E$ be the unit  ball bundle, and $F\colon D\to D$ a map covering a Lipschitz homeomorphism $f\colon X\to X$. Assume  that there is $0\le \kappa<1$ such that for each $x\in X$ the restriction $F_x\colon D_x\to D_x$ satisfies $\lip(F_x)\le \kappa$. Then
\begin{enumerate}
  \item there is a unique section $\sigma_0\colon X\to D $ whose image is invariant under $F$;
  \item if  $\kappa \lip(f)^\alpha<1$ for some $0<\alpha\le 1$, then $\sigma_0$ is Hölder continuous with exponent~$\alpha$.
\end{enumerate}
\fle

\cpr\label{th.Holder}
Let $f: M\to M$ be a $C^1$ diffeomorphism and $\Omega\subset M$  a compact  invariant  set with a  dominated splitting   $T_\Omega M=E^{cs}\oplus
E^{cu}$. Then the fiber bundles $E^{cs}$ and $E^{cu}$ are Hölder continuous on $\Omega$.
\fpr

\dem We consider only the centre-unstable bundle as the other one is similar.
For each $x\in \Omega$ let $L_x$ be the space of bounded linear maps from $E_x^{cu}$ to $E_x^{cs}$ and let $L_x^1$ denote the unit ball around $0\in L_x$. We define  $\Gamma_x: L_x^1\to L_{f(x)}^1$ as the graph transform induced by
 $Df(x)$:
    $$\Gamma_x(\mu_x)=(Df \vert E^{cs}_x)\cdot\mu_x \cdot (Df^{-1} \vert E^{cu}_{f(x)}).$$
Consider $L$ the vector bundle over $\Omega$ whose fiber over each $x\in \Omega$ is $L_x$, and let $L^1$ be its unit ball bundle. Then $\Gamma : L^1\to L^1$ is a bundle map covering $f\vert \Omega$ with
 $$\lip(\Gamma_x)\le \|Df \mid E^{cs}_x\|
\cdot \|Df^{-1} \mid E^{cu}_{f(x)}\| \le\lambda<1.$$
Let $c$ be a Lipschitz constant for $f|_\Omega$, and choose $0<\alpha\le 1$ small so that  $\lambda c^\alpha<1$. By Lemma~\ref{th.hirsch-pugh} there exists a unique section $\sigma_0\colon M\to L^1 $ whose image is invariant under~$\Gamma$ and it satisfies a Hölder condition of exponent~$\alpha$. This unique section is necessarily the null section.
\cqd

The next result gives precisely (P$_5$)(a).

\cco\label{co.produtorio}
There are $C>0$ and $0<\beta<1$ such that for all $y\in\gamma^s(x)$ and $ n\ge 0$
 $$\displaystyle
 \log \prod_{i=n}^\infty\frac{\det Df^u(f^i(x))}{\det Df^u(f^i(y))}\le C\beta^{n}.
 $$
\fco
\dem As we are assuming that $Df$ is Hölder continuous, it follows from Proposition~\ref{th.Holder} that $\log|\det Df^u|$ is  Hölder continuous. The conclusion is then an immediate consequence of the uniform contraction on stable leaves.
\cqd

To prove (P$_5$)(b) we  introduce some useful notions. We say that $\phi: N\to P$, where $N$ and $P$ are submanifolds of $M$,  is \emph{absolutely continuous} if it is an injective map for which there exists $J:N\to\RR$ such that
 $$
 \leb_P(\phi(A))=\int_A Jd\leb_N.
 $$
$J$ is called the \emph{Jacobian} of $\phi$.
Property (P$_5$)(b) can be restated in the following terms:

\cpr\label{pr.regulstable}
Given
$\gamma,\gamma'\in\Gamma^u$, define
$\phi\colon\gamma'\to\gamma$  by
$\phi(x)=\gamma^s(x)\cap \gamma$. Then $\phi$ is absolutely continuous and  the  Jacobian  of $\phi$ is given by
        $$
        J(x)=
        \prod_{i=0}^\infty\frac{\det Df^u(f^i(x))}{\det
        Df^u(f^i(\phi(x)))}.$$
\fpr
One can easily deduce from Corollary~\ref{co.produtorio} that this infinite product converges uniformly.
The remaining of this section is devoted to the proof of Proposition~\ref{pr.regulstable}.
We start with a general result about the convergence of Jacobians whose proof is given in \cite[Theorem~3.3]{Ma}.

\cle\label{le.jacomane}
Let $N$ and $P$ be manifolds, $P$ with finite volume, and for each $n\ge 1$,  let $\phi_n:N\to P$ be an
absolutely continuous map with Jacobian $J_n$. Assume that
\begin{enumerate}
  \item $\phi_n$ converges uniformly to an injective
continuous map $\phi:N\to P$;
  \item $J_n$ converges uniformly to an integrable function $J: N\to\RR$.
\end{enumerate}
Then
$\phi$ is absolutely continuous with Jacobian $J$.
\fle
For the sake of completeness, we observe that there is a slight difference in our definition of absolute continuity. Contrarily to  \cite{Ma}, and for reasons that will become clear below, we do not impose the continuity of the maps $\phi_n$. However, the proof of \cite[Theorem~3.3]{Ma} uses only the continuity of the limit function $\phi$, and so it still works in our case.

Consider now $\gamma,\gamma'\in\Gamma^u$ and
$\phi\colon\gamma'\to\gamma$  as in Proposition~\ref{pr.regulstable}. The proof of the  next lemma is given in \cite[Lemma 3.4]{Ma} for uniformly hyperbolic diffeomorphisms. Nevertheless, one can easily see that it  is obtained as a consequence of \cite[Lemma 3.8]{Ma} whose proof uses only the existence of a dominated splitting.

\cle\label{le.mane}
For each $n\ge 1$, there is an absolutely continuous  $\pi_n:f^n(\gamma)\to f^n(\gamma')$ with Jacobian $G_n$ satisfying
\begin{enumerate}
  \item $\displaystyle \lim_{n\to\infty}\sup_{x\in \gamma}\left\{ \dist_{f^n(\gamma')}(\pi_n(f^n(x)),f^n(\phi(x))\right\}=0$;
  \item $\displaystyle \lim_{n\to\infty}\sup_{x\in f^n(\gamma)} \left\{ |1-G_n(x)|  \right\}=0$.
\end{enumerate}

\fle

We consider the sequence of  consecutive return times
for points in $\Lambda$,
 \begin{equation*}\label{def.rs}
 r_1=R\qand r_{n+1}=r_{n}+R\circ f^{r_{n}},\quad\text{for $n\ge1$}.
 \end{equation*}
Notice that these return time functions are defined $\leb_\gamma$ almost everywhere on each $\gamma\in \Gamma^u$ and are piecewise constant.

\cre\label{re.sp} Using the sequence of return times one can easily construct a sequence of  $\leb_\gamma$ mod 0  partitions   $(\mathcal Q_n)_n$ by $s$-subsets of $\Lambda$ with $r_n$ constant on each element of $\mathcal Q_n$, for which (P$_1$)-(P$_5$) hold when we take $r_n$ playing the role of $R$ and the elements of $\mathcal Q_n$ playing the role of the $s$-subsets. Moreover, the constants $C>0$ and $0<\beta<1$ can be chosen not depending on $n$.
\fre

We define, for each $n\ge 1$, the map $\phi_n: \gamma\to\gamma'$ as
 \begin{equation}\label{eq.phin}
 \phi_n=f^{-r_n}\pi_{r_n} f^{r_n}.
 \end{equation}
It is straightforward to check that $\phi_n$ is absolutely continuous with   Jacobian
  \begin{equation}\label{eq.Jn}
  J_n(x)=\frac{|\det (Df^{r_n})^u(x)|}{|\det
        (Df^{r_n})^u(\phi_n(x))|}\cdot G_{r_n}(f^{r_n}(x)).
  \end{equation}
Observe that these functions are defined $\leb_\gamma$ almost everywhere. So, we may find a Borel set $A\subset \gamma$ with full $\leb_\gamma$ measure on which they are all defined. We extend $\phi_n$ to $\gamma$ simply by considering $\phi_n(x)=\phi(x)$ and $J_n(x)=J(x)$ for all $n\ge 1$ and $x\in \gamma\setminus A$. Since $A$ has zero $\leb_\gamma$ measure one still has that $J_n$ is the Jacobian of $\phi_n$.

Proposition~\ref{pr.regulstable} is now a consequence of  Lemma~\ref{le.jacomane} together with the next~one.

\cle\label{le.phin}
$(\phi_n)_n$ converges   uniformly to $\phi$ and   $(J_n)_n$ converges uniformly to $J$.
\fle

\dem It is sufficient to prove the convergence of each sequence restricted to $A$ described above. In particular, the expressions of $\phi_n$ and $J_n$ are given by~\eqref{eq.phin} and~\eqref{eq.Jn} respectively.

Let us prove first  the case of $(\phi_n)_n$. Using the backward contraction on unstable leaves given by (P$_3$) and recalling Remark~\ref{re.sp}, we may write for each $x\in\gamma$
\begin{eqnarray*}
  \dist_{\gamma'}(\phi_n(x),\phi(x))  &=& \dist_{\gamma'}(f^{-r_n}\pi_{r_n} f^{r_n}(x),f^{-{r_n}}f^{r_n}\phi(x)) \\
   &\le& C\beta^{r_n}\dist_{f^{r_n}(\gamma')}(\pi_{r_n} f^{r_n}(x),f^{r_n}\phi(x)).
\end{eqnarray*}
Since ${r_n}\to\infty$ as $n\to\infty$ and $\dist_{f^{r_n}(\gamma')}(\pi_{r_n} f^{r_n}(x),f^{r_n}\phi(x))$ is bounded, by Lemma~\ref{le.mane}, we have the uniform convergence of $\phi_n$ to $\phi$.

 Let us prove now that  the case of the Jacobians $(J_n)_n$. By \eqref{eq.Jn}, we have
  $$J_n(x)=\frac{|\det (Df^{{r_n}})^u(x)|}{|\det
        (Df^{{r_n}})^u(\phi (x))|}\cdot \frac{|\det (Df^{{r_n}})^u(\phi (x))|}{|\det
        (Df^{{r_n}})^u(\phi_n(x))|}\cdot G_{{r_n}}(f^{{r_n}}(x)).
        $$
 Using the chain rule and  Corollary~\ref{co.produtorio}, it easily follows that the first term in the product above converges uniformly to $J(x)$. Moreover, by Lemma~\ref{le.mane}, the third term converges uniformly to~1. It remains to see that the middle term also converges uniformly to~1. Recalling Remark~\ref{re.sp}, by  bounded distortion we have
  \begin{eqnarray*}
    \frac{|\det (Df^{{r_n}})^u(\phi (x))|}{|\det
        (Df^{{r_n}})^u(\phi_n(x))|}  &\le &\exp\big(C\dist_{f^{r_n}(\gamma')}(f^{r_n}(\phi(x)),f^{r_n}(\phi_n(x)))^{\eta}\big) \\
     &=& \exp\big(C\dist_{f^{r_n}(\gamma')}(f^{r_n}(\phi(x)),\pi_{r_n}(f^{r_n}(x)))^{\eta}\big).
  \end{eqnarray*}
  Similarly we obtain
  $$\frac{|\det (Df^{{r_n}})^u(\phi (x))|}{|\det
        (Df^{{r_n}})^u(\phi_n(x))|}  \ge  \exp\big(-C\dist_{f^{r_n}(\gamma')}(f^{r_n}(\phi(x)),\pi_{r_n}(f^{r_n}(x)))^{\eta}\big).
        $$
        The conclusion then follows from Lemma~\ref{le.mane}.
 \cqd

\section{Integrability of the return time}\label{sec:int}
In the previous sections we have constructed a GMY structure on \(\Omega\). To complete the proof of Theorem
\ref{thE} it just remains to show that this GMY structure has integrable return times as in \eqref{integrab_return}.
Recall first that the existence of a GMY structure implies the existence of an induced map \( F: \Lambda \to \Lambda\) with an invariant probability measure \(\nu\), see remarks following Theorem \ref{thE}. This measure can be disintegrated into a family of conditional measures on the unstable leaves \( \{\gamma^u\}\) with conditional measures   which are equivalent to Lebesgue measure with densities bounded by uniform constants above and below, see  \cite[Lemma 2]{You98}. We fix one such unstable leaf \( \gamma\in \Gamma^u\) and let \(\bar \nu\) denote the conditional measure associated to \( \nu \) and equivalent to Lebesgue. The integrability of the return times with respect to Lebesgue as in~\eqref{integrab_return} therefore follows immediately from the next result.

\label{intgrreturns}
\begin{Proposition}
The inducing time function $R$ is $\bar{\nu}$-integrable.
\end{Proposition}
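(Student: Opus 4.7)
The plan is to reduce the $\bar\nu$-integrability of $R$ to Lebesgue-integrability on the reference disk $\Delta_0$ and then to exploit the summability $\sum_n\leb_\Delta(S_n)<\infty$ from Proposition~\ref{d.prop.Sn} together with the positive upper density of hyperbolic times given by Lemma~\ref{l:hyperbolic2}. By \cite[Lemma~2]{You98}, the conditional density of $\nu$ on $\gamma$ is bounded above and below with respect to $\leb_\gamma$, and by the bounded distortion (P$_4$) together with the absolute continuity of the holonomy maps (P$_5$)(b), $\int_\gamma R\,d\leb_\gamma$ is comparable, uniformly in $\gamma\in\Gamma^u$, to $\int_{\Delta_0}R\,d\leb_\Delta$. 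Since $R|_\omega\in[n,n+N_0]$ for every $\omega\in\mathcal{P}_n$ and $\Delta_{n-1}$ is precisely the set of points of $\Delta_0$ not yet assigned to a partition element at step $n-1$, it further reduces to showing $\sum_{n\ge n_0}\leb_\Delta(\Delta_n)<\infty$.

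The key geometric input from the construction is the inclusion
\[
\Delta_n\cap H_k\subseteq S_k\qquad\text{for every } k\le n,
\]
which is an immediate consequence of \eqref{re.saturacao} together with the fact that $\Delta_n$ is disjoint from every partition element created at any step $\le n$. Setting $h(x,n):=\#\{1\le k\le n:x\in H_k\}$ and $\hat N(x):=\#\{k\ge n_0:x\in S_k\}$, this inclusion yields the pointwise bound $\chi_{\Delta_n}(x)\,h(x,n)\le \hat N(x)$. Proposition~\ref{d.prop.Sn} together with Fubini gives $\int\hat N\,d\leb_\Delta=\sum_k\leb_\Delta(S_k)<\infty$, while Lemma~\ref{l:hyperbolic2} gives $\limsup_{n\to\infty}h(x,n)/n\ge\theta$ for $\leb_\Delta$-a.e.\ $x$. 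These two facts together already imply that $R$ is finite a.e.: if $R(x)=\infty$, then $x\in\Delta_n$ for every $n$ and the inequality above forces $\hat N(x)\ge\theta n/2$ along the subsequence realising the limsup, contradicting $\hat N(x)<\infty$ a.e.

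The step I expect to be the main obstacle is upgrading this a.e.\ finiteness to genuine $L^1$-integrability of $R$. With only a limsup density of hyperbolic times one cannot conclude a pointwise bound of the form $R(x)\le C\hat N(x)$, because the gaps between successive \emph{good} times, namely those $n$ for which $h(x,n)/n\ge\theta/2$, might a priori be arbitrarily large. To address this I would split
\[
\sum_n\leb_\Delta(\Delta_n)=\sum_n\leb_\Delta(\Delta_n\cap G_n)+\sum_n\leb_\Delta(\Delta_n\cap G_n^c),\qquad G_n:=\{x\in\Delta_0:h(x,n)\ge\theta n/2\}.
\]
The first sum is controlled via the inclusion $\Delta_n\cap G_n\subseteq\{\hat N\ge\theta n/2\}$ and a Fubini exchange of sums, yielding $\sum_n\leb_\Delta(\Delta_n\cap G_n)\le \tfrac{2}{\theta}\int\hat N\,d\leb_\Delta+\leb_\Delta(\Delta_0)<\infty$. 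The second sum is the delicate one, since $\leb_\Delta(G_n^c)$ need not tend to zero; the points of $\Delta_n\cap G_n^c$ are ``still waiting'' for their next good hyperbolic time, and I would control their contribution by combining the exponential-in-$(n-k)$ decay of $\leb_\Delta(S_n^\omega)$ given by Lemma~\ref{estimativas} with a Borel--Cantelli type argument that quantifies the distribution of waiting times between consecutive good $n$. This final quantitative estimate is where the bulk of the technical work lies.
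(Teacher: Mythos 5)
Your reduction to $\sum_{n\ge n_0}\leb_\Delta(\Delta_n)<\infty$ is sound, and the inclusion $\Delta_n\cap H_k\subseteq S_k$ for $k\le n$ is a correct consequence of \eqref{re.saturacao}. Your first sum $\sum_n\leb_\Delta(\Delta_n\cap G_n)$ is indeed controlled by the argument you sketch. But the second sum $\sum_n\leb_\Delta(\Delta_n\cap G_n^c)$ is the whole difficulty, and you do not actually prove it is finite; you only outline a hoped-for Borel--Cantelli argument. The obstacle you identify is real: because the hypothesis~\eqref{NUE2} is a $\liminf$ condition, Lemma~\ref{l:hyperbolic2} only gives a $\limsup$ density of hyperbolic times, so $\leb_\Delta(G_n^c)$ need not decay at all, and Lemma~\ref{estimativas} controls only the satellite mass attached to elements that have \emph{already} returned, not the mass stuck in long excursions. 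Without a new quantitative input, I do not see how your second sum closes; you would essentially need a large-deviations estimate on waiting times between good blocks, which is not available under the weak hypothesis of the theorem. In short, you reduce the problem to an honest inequality that remains open.

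The paper avoids this difficulty by working with the $F$-invariant measure $\nu$ (whose existence comes for free from the GMY structure already built) rather than trying to prove $\leb$-integrability directly. Writing $H^{(n)}(x)$, $S^{(n)}(x)$, $R^{(n)}(x)$ for the counts of hyperbolic times, satellite incidences, and returns in the first $n$ iterates, the construction forces $R^{(n)}(x)+S^{(n)}(x)\ge\kappa H^{(n)}(x)$. Birkhoff's ergodic theorem applied to $(F,\nu)$ makes the ratio $S^{(n)}(x)/R^{(n)}(x)$ converge to the finite quantity $\int S\,d\bar\nu$ (finiteness is exactly Proposition~\ref{d.prop.Sn}), and $n/R^{(n)}(x)$ converges to $\int R\,d\bar\nu$ regardless of integrability. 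Combining these with the $\limsup$ density of hyperbolic times yields, along a subsequence, $R^{(n)}(x)/n\ge\kappa'>0$; if $\int R\,d\bar\nu$ were infinite this ratio would tend to $0$ along every subsequence, a contradiction. The crucial point is that averaging along the orbit under $\nu$ replaces the pointwise or $\leb$-tail bound that your approach cannot supply: Birkhoff converts the \emph{summability} of satellite masses into a finite \emph{per-return} satellite count without requiring any decay of $\leb_\Delta(\Delta_n\cap G_n^c)$. You should abandon the Lebesgue-tail strategy and adopt this ergodic averaging argument.
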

\begin{proof}
We first introduce some notation.
For \( x\in \Delta \) we consider the orbit \( x, f(x),..., f^{n-1}(x) \) of the point \( x \) under iteration by \( f \) for some large value of \( n \). In particular \( x \) may undergo several full returns to \( \Delta \) before time \( n \).  Then we define the following quantities:
\begin{align*}
 H^{(n)}(x) &:= \text{number of hyperbolic times for \( x \) before time \( n \)}
 \\
  S^{(n)}(x) &:=\text{number of times \( x \) belongs to a satellite before time \( n \)}
  \\
   R^{(n)}(x)&:= \text{number of returns of \( x \) before time \( n \)}
\end{align*}
Each time that \( x \) has a hyperbolic time, it either then  has a return within some finite and uniformly bounded number of iterations, or by definition it belongs to a satellite. Therefore there exists some constant \( \kappa>0 \) independent of \( x \) and \( n \) such that
\[   R^{(n)}(x)+S^{(n)}(x) \geq \kappa H^{(n)}(x)
 \]
Notice that \( x \) may belong to a satellite or have a return without it having a hyperbolic time itself, since it may belong to a hyperbolic pre-disk of some other point \( y \) which has a hyperbolic time.
Dividing the above equation through by \( n \)  we get
\[
\frac{R^{(n)}(x)}{n}+\frac{S^{(n)}(x)}{n} \geq \frac{\kappa H^{(n)}(x)}{n}
\]
 Recalling that hyperbolic times have uniformly positive asymptotic frequency, there exists a constant \( \theta>0 \) such that \( H^{(n)}(x)/n \geq \theta \) for all \( n \) sufficiently large, and therefore, rearranging the left hand side above gives
 \[
 \frac{R^{(n)}(x)}{n}\left(1+\frac{S^{(n)}(x)}{R^{(n)}(x)}\right) \geq  \kappa\theta > 0
  \]
 Moreover \( S^{(n)}(x)/R^{(n)}(x) \) converges by Birkhoff's ergodic theorem to precisely the average number of times \( \int S d\nu \) that typical points belong to satellites before they return, and from Proposition~\ref{d.prop.Sn} it follows that \( \int S d\nu < \infty \). Therefore,
 we have
 \begin{equation}
 \label{key}   \frac{R^{(n)}(x)}{n} \geq \kappa' > 0
 \end{equation}
 for all sufficiently large \( n \) where \( \kappa'\) can be chosen arbitrarily close to \(\kappa\theta/(1+\int S d\bar{\nu}) \) which is independent of \( x \) and $n$.
 To conclude the proof notice that  \( n/R^{(n)}(x) \) is the average return time over the first \( n \) iterations and thus converges by Birkhoff's ergodic theorem to \( \int \bar{R} d\bar{\nu} \). This holds even if we do not assume a priori that \( \bar{R} \) is integrable since it is a positive function and thus \( \int \bar{R} d\bar{\nu} \) is always well defined and lack of integrability necessarily implies \( \int \bar{R} d\bar{\nu} = + \infty \).
 Thus. arguing by contradiction and assuming that   \( \int \bar{R}d\bar{\nu} = + \infty  \) gives
\( {n}/{R^{(n)}(x)} \to  \int \bar{R}d\bar{\nu} = + \infty \)
and therefore
\( {R^{(n)}(x)}/{n} \to 0\).  This contradicts \eqref{key} and therefore implies that we must have \( \int\bar{ R} d\bar{\nu} < +\infty \)
 as required.
\end{proof}

\section{Liftabilty}\label{lift}

In this section we complete the proof of  Theorem \ref{thF}.
The `if" part of this result is well known and we refer to it in the comments preceeding Theorem \ref{thF}. We therefore just need to show that every SRB measure with positive Lyapunov exponents in the \(E^{cu}\) direction is liftable.  To achieve this,  first of all let \( \Omega\) denote the support of the given SRB measure~\(\mu\). Then \(\Omega\) is invariant under \( f\) and thus under any positive iterate of \( f \).
We will
show in the following proposition that  there exists some \(  N\geq 1  \) such that \(  f^{N}  \) on \( \Omega\) is   nonuniformly expanding, and thus weakly nonuniformly expanding, along \(  E^{cu}  \). We can then apply the conclusions of Theorem \ref{thE} to obtain a GMY structure  for \( f^N\) with integrable return time function \( R \). This easily give a corresponding GMY structure for \(f\) with return time function \(N R \) which is therefore still integrable and therefore,
as explained above,  gives rise to an SRB measure.
By uniqueness of SRB measures it follows that this measure coincides with \( \mu \),  thus proving that \( \mu \) is liftable.

\cpr\label{pr.WNUE}
There exists  $N\ge 1$  such that  \( f^N\) is non-uniformly expanding along \(  E^{cu}  \)
on a set with positive Lebesgue measure.
\fpr

\begin{proof}
We prove first of all that there exists  \( N \geq 1 \) such that
\begin{equation}\label{neg}
\int \log \|(Df^{N}|_{E_{x}^{cu}})^{-1}\| d\mu < 0.
\end{equation}
Indeed,  by assumption all Lyapunov exponents of \(  f  \) along \(  E^{cu}  \) are positive and therefore  all Lyapunov exponents of the map  \(  f^{-1}  \) along \(  E^{cu}  \) are negative. Thus, considering the cocycle \(  (x,v) \mapsto (f^{-1}(x), Df^{-1}(x) v)  \), Oseledets' Theorem implies that there exists \(  \lambda  \) such that
\begin{equation}\label{eq:lift1}
\lim_{n\to\infty}\frac 1n \log \left\|Df^{-1}|_{E^{cu}_{f^{-n+1}(x)}}\cdots Df^{-1}|_{E^{cu}_{x}}\right\| = \lambda<0
\end{equation}
where \(  \lambda  \) is the largest Lyapunov exponent of $f^{-1}$, see \cite[Addendum 4]{Boc}. By the chain rule and the inverse function theorem, we have
\begin{equation}\label{eq:lift2}
Df^{-1}|_{E^{cu}_{f^{-n+1}(x)}}\cdots Df^{-1}|_{E^{cu}_{x}}= \left(Df^{n}|_{E_{f^{-n}(x)}^{cu}}\right)^{-1}.
\end{equation}
Since the sequence
\[
\phi_{n} = \log  \left\| \left(Df^{n}|_{E_{f^{-n}(x)}^{cu}} \right)^{-1} \right\|
\]
satisfies \(  \phi_{n+m}\leq \phi_{n}+\phi_{m}\circ f^{-n} \), using the invariance of \(  \mu  \) with respect to \(  f^{-1}  \) and Kingmann's Subadditive Ergodic Theorem we have, for \(  \mu  \) almost every \(  x  \),
\[
\lim_{n\to \infty}
 \frac 1n \log \left\|\left(Df^{n}|_{E_{f^{-n}(x)}^{cu}}\right)^{-1}\right\|
=  \inf_{n\geq 1} \frac 1n \int \log \left\|\left(Df^{n}|_{E_{f^{-n}(x)}^{cu}}\right)^{-1}\right\| d\mu,
\]
which, together with   \eqref{eq:lift1} and  \eqref{eq:lift2}, gives \eqref{neg}.

 Notice  that \(  \mu  \) may not be ergodic for \(  f^{N}  \), but it can have at most \(  N  \) ergodic components. Indeed, notice first of all
that  any subset $C$ which is  $f^N$-invariant and has
 positive measure, satisfies  $\mu(C)\geq 1/N$:
assume by
contradiction that $\mu(C)< 1/N$ and consider the set
$\cup_{j=0}^{N-1}f^{-j}(C).$
We have that
$$0<\mu\left(\bigcup_{j=0}^{N-1}f^{-j}(C)\right)\leq\sum_{j=0}^{N-1}\mu(f^{-j}(C))<1.$$
This gives a contradiction, because the set is $f$- invariant and
$\mu$ is ergodic.
Now, if $(f^N,\mu)$ is not ergodic, then we  decompose $M$ into
a union of two $f^N$-invariant disjoint sets  with positive measure.
If the restriction of $\mu$ to one of these sets is not ergodic,
then we iterate this process. Note that this must stop after a
finite number of steps with at most $N$ disjoint subsets, since
$f^N$-invariant sets with positive measure have its measure bounded
from below by $1/N$.

Thus, we have that  \(  (f^{N}, \mu)  \) has at most \(  N  \)
ergodic components. By \eqref{neg}, at least one of these ergodic components, whose support
 we denote by \(  \Sigma \),
satisfies
\(
    \int_{\Sigma}\log \|(Df^{N}|{E_{x}^{cu}})^{-1}\|d\mu < 0.
\)
Hence, by Birkhoff's Ergodic Theorem, for $\mu$ almost every $x\in \Sigma$
one has
\begin{equation*}\label{eq.integral}
\lim_{n\rightarrow\infty}\frac{1}{n}\sum_{j=0}^{n-1}
\log\|(Df^N|{E_{f^{Nj}(x)}^{cu}})^{-1}\| =\int_{\Sigma}\log\|(Df^N|{E_{x}^{cu}})^{-1}\|d\mu <0.
 \end{equation*}
This proves that \(  f^{N}  \) is non-uniformly expanding along \(  E^{cu}  \)
 for $\mu$ almost every point in the set~\( \Sigma\). From the assumption that \(  \mu  \) is an SRB measure we have that
 conditional measures of \(  \mu  \) on local unstable manifolds are absolutely continuous with respect to Lebesgue. In particular there is some local unstable manifold \(  \gamma^{u}  \) on which we have non-uniform expansion for  a set of points of  positive \(  \leb_{\gamma^{u}}  \) measure. Considering the union of local stable manifolds through these points and the   absolute continuity of the stable foliation we get the result.
\end{proof}

\end{document}